\newtheorem{proposition}{Proposition}[section]
\newtheorem{theorem}{Theorem}[section]
\newtheorem{lemma}[proposition]{Lemma}
\numberwithin{equation}{section}
\title{Existence of a ground state and blow-up problem for a nonlinear Schr\"odinger equation with critical growth}
\author{Takafumi Akahori, Slim Ibrahim, Hiroaki Kikuchi and Hayato Nawa}
\date{}
\begin{document}
\maketitle

%\begin{abstract}
%\end{abstract}

%%%%%%
\section{Introduction}
\label{11/06/22/14:14}
%%%%%%
In this paper, we consider the following nonlinear Schr\"odinger equation. 
\begin{equation}\tag{NLS}
\label{11/03/03/7:11}
2i\frac{\partial \psi}{\partial t}+\Delta \psi 
+
\mu |\psi|^{p-1}\psi 
+
|\psi|^{\frac{4}{d-2}}\psi
=0,
\end{equation}
where $\psi=\psi(x,t)$ is a complex-valued function on $\mathbb{R}^{d}\times \mathbb{R}$ ($d\ge 3$), $\Delta$ is the Laplace operator on $\mathbb{R}^{d}$, $\mu>0$ and $1+\frac{4}{d}<p< 2^{*}-1$ ($2^{*}:=\frac{2d}{d-2}$). 
\par 
The nonlinearity of (\ref{11/03/03/7:11}) is attractive and contains the energy-critical term $|\psi|^{\frac{4}{d-2}}\psi$ in the scaling sense. It is known (see \cite{Cazenave-Weissler1, Tao-Visan}) that: 
\\
{\rm (i)} for any $t_{0}\in \mathbb{R}$ and datum $\psi_{0}\in H^{1}(\mathbb{R}^{d})$, there exists a unique solution $\psi \in C(I_{\max},H^{1}(\mathbb{R}^{d}))$ to (\ref{11/03/03/7:11}) with $\psi(t_{0})=\psi_{0}$, where $I_{\max}$ denotes the maximal interval on which $\psi$ exists;  
\\
{\rm (ii)} the solution $\psi$ enjoys the following conservation laws:
\begin{align}
\label{11/06/14/11:02}
\left\| \psi(t) \right\|_{L^{2}}&=\left\| \psi_{0} \right\|_{L^{2}}
\qquad 
\mbox{for any $t \in I_{\max}$},
\\[6pt]
\label{11/06/14/11:03}
\mathcal{H}(\psi(t))&=\mathcal{H}(\psi_{0})
\qquad 
\mbox{for any $t \in I_{\max}$},
\end{align}
where 
\begin{equation}
\label{11/02/26/1:03}
\mathcal{H}(u)
:=
\left\|\nabla u \right\|_{L^{2}}^{2}
-\mu 
\frac{2}{p+1}\left\|u \right\|_{L^{p+1}}^{p+1}
-
\frac{d-2}{d}\left\| u \right\|_{L^{2^{*}}}^{2^{*}}.
\end{equation}
\par 
We are interested, like in \cite{Berestycki-Cazenave}, in the existence of standing wave and blowup problem for (\ref{11/03/03/7:11}).
Here, the standing wave means a solution to (\ref{11/03/03/7:11}) of the form $\psi(x,t)=e^{\frac{i}{2}t \omega}Q(x)$, so that $Q$  must satisfy the elliptic equation 
\begin{equation}\label{11/02/25/6:42}
- \Delta u + \omega u 
-\mu |u|^{p-1}u 
- |u|^{\frac{4}{d-2}}u = 0.
\end{equation}
When $\mu=0$, it is well-known that the equation (\ref{11/02/25/6:42}) has no solution for $\omega>0$. Moreover, if the subcritical perturbation is repulsive, i.e., $\mu <0$, then there is no non-trivial solution; Indeed, using the Pohozaev identity, we can verify that any solution $Q \in H^{1}(\mathbb{R}^{d})$\footnote{The solution  $Q \in H^{1}(\mathbb{R}^{d})$ also belongs to $L_{loc}^{\infty}(\mathbb{R}^{d})$ (see, e.g., Appendix B in \cite{Struwe}).} to (\ref{11/02/25/6:42}) with $\mu<0$ obeys that
\begin{equation}\label{11/02/26/9:30}
0
=
\omega \left\| Q \right\|_{L^{2}}^{2}
-\mu
\left( 1-\frac{d(p-1)}{2(p+1)}\right)\left\| Q \right\|_{L^{p+1}}^{p+1},
\end{equation}
which, together with $\omega>0$ and $p<2^{*}-1$, shows $Q$ is trivial. Thus, $\mu>0$ is necessary for the existence of non-trivial solution to (\ref{11/02/25/6:42}). 
\par     
Our first aim is to seek a special solution to (\ref{11/02/25/6:42}) called ground state via a certain variational problem; $Q$ is said to be the ground state, if it is a non-trivial solution to (\ref{11/02/25/6:42}) and  
\begin{equation}\label{11/06/14/11:45}
\mathcal{S}_{\omega}(Q)=\min\{ \mathcal{S}_{\omega}(u) \colon 
\mbox{$u$ is a solution to (\ref{11/02/25/6:42})}
\},
\end{equation}
where 
\begin{equation}
\label{11/02/25/6:43}
\mathcal{S}_{\omega}(u)
:=
\omega \left\|u \right\|_{L^{2}}^{2}
+
\mathcal{H}(u). 
\end{equation}
More precisely, we look for the ground state as a minimizer of the variational problem below:   
\begin{equation}\label{11/02/25/6:54}
m_{\omega}
:=
\inf\left\{ \mathcal{S}_{\omega}(u) 
\bigm|   u\in H^{1}(\mathbb{R}^{d})\setminus \{0\},
\ \mathcal{K}(u)=0
\right\},
\end{equation}
where 
\begin{equation}\label{11/02/25/6:44}
\begin{split}
\mathcal{K}(u)
&:=
\frac{d}{d\lambda}\mathcal{S}_{\omega}(T_{\lambda}u)\biggm|_{\lambda=1}
=
\frac{d}{d\lambda}\mathcal{H}(T_{\lambda}u)\biggm|_{\lambda=1}
\\[6pt]
&=
2\left\|\nabla u \right\|_{L^{2}}^{2}
-\mu
\frac{d(p-1)}{p+1}\left\|u \right\|_{L^{p+1}}^{p+1}
-
2 
\left\| u \right\|_{L^{2^{*}}}^{2^{*}},
\end{split}
\end{equation}
and $T_{\lambda}$ is the $L^{2}$-scaling operator defined by 
\begin{equation}
\label{11/02/25/8:02}
(T_{\lambda}u)(x):=\lambda^{\frac{d}{2}}u(\lambda x)
\qquad 
\mbox{for $\lambda>0$}.
\end{equation}
We can verify the following fact (see \cite{Berestycki-Cazenave, 
 LeCoz} for the proof): 
\begin{proposition}\label{11/06/15/16:19}
Any minimizer of the variational problem for $m_{\omega}$ becomes a ground state  of (\ref{11/02/25/6:42}). 
\end{proposition}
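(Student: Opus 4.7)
The plan is to use the Lagrange multiplier theorem to turn the constrained minimization into a single-parameter identity, and then exploit the $L^2$-scaling $T_\lambda$ to force the multiplier to vanish.

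First, let $Q$ be a minimizer of $m_\omega$, so $Q \in H^1(\mathbb{R}^d)\setminus\{0\}$ and $\mathcal{K}(Q)=0$. Since the functionals $\mathcal{S}_\omega$ and $\mathcal{K}$ are both of class $C^1$ on $H^1(\mathbb{R}^d)$, the Lagrange multiplier theorem yields some $\eta \in \mathbb{R}$ with $\mathcal{S}_\omega'(Q) = \eta \,\mathcal{K}'(Q)$, provided $\mathcal{K}'(Q) \not\equiv 0$; this nondegeneracy condition would be verified as part of the argument by showing the explicit scaling computation below is nonzero, which rules out $\mathcal{K}'(Q)=0$.

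Next, I would pair both sides with the scaling generator $\Lambda Q := \frac{d}{d\lambda}(T_\lambda Q)\big|_{\lambda=1}$. By the definition of $\mathcal{K}$ in (\ref{11/02/25/6:44}), the left-hand side produces exactly $\mathcal{K}(Q)=0$, so the equation collapses to
\[
\eta\,\frac{d}{d\lambda}\mathcal{K}(T_\lambda Q)\bigg|_{\lambda=1}=0.
\]
The core computation is then to evaluate $\frac{d}{d\lambda}\mathcal{K}(T_\lambda Q)|_{\lambda=1}$ explicitly. Using the scaling behavior of the three homogeneous terms $\|\nabla Q\|_{L^2}^2$, $\|Q\|_{L^{p+1}}^{p+1}$ and $\|Q\|_{L^{2^*}}^{2^*}$ under $T_\lambda$, this derivative is a linear combination of these three quantities. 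Substituting the constraint $\mathcal{K}(Q)=0$ to eliminate $\|\nabla Q\|_{L^2}^2$, the expression reduces to a sum of two terms whose coefficients are $\frac{4-d(p-1)}{2}$ and $2-2^{*}$. Both are strictly negative precisely because $p>1+\frac{4}{d}$ and $2^{*}>2$, so the derivative is strictly negative, and hence $\eta=0$. This is the only real obstacle: everything else is formal, but this sign is essential and is the place where the mass-super-critical hypothesis on $p$ is used.

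Conclusion: with $\eta=0$, we have $\mathcal{S}_\omega'(Q)=0$, which is exactly the elliptic equation (\ref{11/02/25/6:42}), so $Q$ is a nontrivial solution. It remains to verify the minimality condition (\ref{11/06/14/11:45}) characterizing the ground state. For this, let $u$ be any solution of (\ref{11/02/25/6:42}); pairing the equation with $\Lambda u$ and using the fact that $\langle \mathcal{S}_\omega'(u),\Lambda u\rangle = \frac{d}{d\lambda}\mathcal{S}_\omega(T_\lambda u)|_{\lambda=1}=\mathcal{K}(u)$, one obtains $\mathcal{K}(u)=0$ (this is essentially the Pohozaev-type identity already invoked in the introduction). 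Thus every solution of (\ref{11/02/25/6:42}) is admissible in the variational problem defining $m_\omega$, and the minimality of $Q$ gives $\mathcal{S}_\omega(Q)\le \mathcal{S}_\omega(u)$, completing the proof.
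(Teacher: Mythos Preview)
The paper does not give its own proof of this proposition; it merely cites \cite{Berestycki-Cazenave, LeCoz}. Your argument is exactly the standard one found in those references: apply the Lagrange multiplier rule to obtain $\mathcal{S}_{\omega}'(Q)=\eta\,\mathcal{K}'(Q)$, test both sides against the scaling generator $\Lambda Q$ to reduce to $\eta\,\frac{d}{d\lambda}\mathcal{K}(T_{\lambda}Q)\big|_{\lambda=1}=0$, and then use the sign of that derivative (strictly negative precisely because $p>1+\tfrac{4}{d}$ and $2^{*}>2$) to force $\eta=0$; the ground-state minimality then follows from the Pohozaev-type identity $\mathcal{K}(u)=0$ for any solution of (\ref{11/02/25/6:42}). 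Your proof is correct and coincides with the cited approach. One small caveat worth making explicit in a polished write-up: the pairing $\langle \mathcal{S}_{\omega}'(u),\Lambda u\rangle$ that yields $\mathcal{K}(u)=0$ for an arbitrary $H^{1}$ solution is formal because $x\cdot\nabla u$ need not lie in $L^{2}$; the rigorous justification requires the usual truncation argument (as in \cite{Berestycki-Lions}), which is indeed implicit in the Pohozaev identity the paper already invokes in the introduction.
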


We remark that the existence of the ground state is studied by many authors (see, e.g., \cite{Alves-Souto-Montenegro, Berestycki-Lions, Berestycki-Cazenave}); In particular, in \cite{Alves-Souto-Montenegro}, they proved the existence of a ground state for a class  of elliptic equations including (\ref{11/02/25/6:42}) through a minimization problem different from ours. An advantage of our variational problem is that 
 we can prove the instability of the ground state found through it. Indeed, we can prove the blowup result (see Theorem \ref{11/04/22/10:26} below). 

\par 
In order to find the minimizer of the variational problem for $m_{\omega}$, we also consider the auxiliary variational problem:
\begin{equation}\label{11/02/25/6:55}
\widetilde{m}_{\omega}
=
\inf\left\{ \mathcal{I}_{\omega}(u) 
\bigm| u\in H^{1}(\mathbb{R}^{d})\setminus \{0\},
\ \mathcal{K}(u)\le 0
\right\},
\end{equation}
where 
\begin{equation}\label{11/02/25/6:45}
\begin{split}
\mathcal{I}_{\omega}(u)
&:=
\mathcal{S}_{\omega}(u)-\frac{2}{d(p-1)}\mathcal{K}(u)
\\[6pt]
&=
\omega \left\|u \right\|_{L^{2}}^{2}
+
\frac{d(p-1)-4}{d(p-1)}\left\|\nabla u \right\|_{L^{2}}^{2}
+\frac{4-(d-2)(p-1)}{d(p-1)}
\left\|u \right\|_{L^{2^{*}}}^{2^{*}}.
\end{split}
\end{equation}
Note that the subcritical potential term disappears in the functional $\mathcal{I}_{\omega}$. The problem (\ref{11/02/25/6:45}) is good to treat, since the functional $\mathcal{I}_{\omega}$ is positive and the constraint is $\mathcal{K}\le 0$ which is stable under the Schwarz symmetrization. Moreover, we have the following;

\begin{proposition}\label{11/04/09/15:32}
Assume $d\ge 3$. Then, for any $\omega>0$ and $\mu>0$, we have;
\\[6pt]
(i) $m_{\omega}=\widetilde{m}_{\omega}$
\\[6pt]
(ii) Any minimizer of the variational problem for $\widetilde{m}_{\omega}$ is also a minimizer for $m_{\omega}$, and vice versa.
\end{proposition}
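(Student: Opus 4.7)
The plan is to exploit the key algebraic identity $\mathcal{I}_\omega(u) = \mathcal{S}_\omega(u) - \frac{2}{d(p-1)}\mathcal{K}(u)$, together with the behavior of $\mathcal{S}_\omega$, $\mathcal{I}_\omega$ and $\mathcal{K}$ under the scaling $T_\lambda$. The easy direction, $\widetilde{m}_\omega\le m_\omega$, is immediate: any admissible $u$ for $m_\omega$ satisfies $\mathcal{K}(u)=0$, hence also belongs to the feasible set of $\widetilde{m}_\omega$, and on the locus $\{\mathcal{K}=0\}$ the two functionals coincide, so $\widetilde{m}_\omega\le\mathcal{I}_\omega(u)=\mathcal{S}_\omega(u)$.

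For the reverse inequality I would take any $u\in H^1(\mathbb{R}^d)\setminus\{0\}$ with $\mathcal{K}(u)\le 0$ and flow it along $T_\lambda$ to the constraint surface $\{\mathcal{K}=0\}$. Using $\|T_\lambda u\|_{L^2}^2=\|u\|_{L^2}^2$, $\|\nabla T_\lambda u\|_{L^2}^2=\lambda^2\|\nabla u\|_{L^2}^2$, $\|T_\lambda u\|_{L^{p+1}}^{p+1}=\lambda^{d(p-1)/2}\|u\|_{L^{p+1}}^{p+1}$ and $\|T_\lambda u\|_{L^{2^*}}^{2^*}=\lambda^{2^*}\|u\|_{L^{2^*}}^{2^*}$, the function
\begin{equation*}
\lambda\mapsto \mathcal{K}(T_\lambda u)=2\lambda^2\|\nabla u\|_{L^2}^2-\mu\frac{d(p-1)}{p+1}\lambda^{d(p-1)/2}\|u\|_{L^{p+1}}^{p+1}-2\lambda^{2^*}\|u\|_{L^{2^*}}^{2^*}
\end{equation*}
is continuous in $\lambda$, tends to $0^+$ like $2\lambda^2\|\nabla u\|_{L^2}^2>0$ as $\lambda\to 0^+$ (since the assumptions $p>1+4/d$ and $d\ge 3$ give $d(p-1)/2>2$ and $2^*>2$), and equals $\mathcal{K}(u)\le 0$ at $\lambda=1$. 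By the intermediate value theorem there is $\lambda_0\in(0,1]$ with $\mathcal{K}(T_{\lambda_0}u)=0$, so $T_{\lambda_0}u$ is a competitor for $m_\omega$.

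Next I would compare $\mathcal{S}_\omega(T_{\lambda_0}u)$ with $\mathcal{I}_\omega(u)$. Since the subcritical term has been eliminated from $\mathcal{I}_\omega$, one reads off from (\ref{11/02/25/6:45}) that
\begin{equation*}
\mathcal{I}_\omega(T_\lambda u)=\omega\|u\|_{L^2}^2+\frac{d(p-1)-4}{d(p-1)}\lambda^2\|\nabla u\|_{L^2}^2+\frac{4-(d-2)(p-1)}{d(p-1)}\lambda^{2^*}\|u\|_{L^{2^*}}^{2^*},
\end{equation*}
and both bracketed coefficients are strictly positive under $1+4/d<p<2^*-1$. Hence $\lambda\mapsto\mathcal{I}_\omega(T_\lambda u)$ is nondecreasing, and $\mathcal{S}_\omega(T_{\lambda_0}u)=\mathcal{I}_\omega(T_{\lambda_0}u)\le\mathcal{I}_\omega(u)$ because $\lambda_0\le 1$. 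Therefore $m_\omega\le\mathcal{I}_\omega(u)$ for every admissible $u$ in $\widetilde{m}_\omega$, giving $m_\omega\le\widetilde{m}_\omega$ and thus (i).

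For (ii), the correspondence is a corollary of the mechanism above. If $u$ minimizes $m_\omega$, then $\mathcal{K}(u)=0$ forces $\mathcal{I}_\omega(u)=\mathcal{S}_\omega(u)=m_\omega=\widetilde{m}_\omega$, so $u$ minimizes $\widetilde{m}_\omega$. Conversely, if $u$ minimizes $\widetilde{m}_\omega$ but had $\mathcal{K}(u)<0$, then $\lambda_0<1$ and the monotonicity argument yields $\mathcal{I}_\omega(T_{\lambda_0}u)<\mathcal{I}_\omega(u)=\widetilde{m}_\omega$, contradicting minimality; hence $\mathcal{K}(u)=0$, and then $\mathcal{S}_\omega(u)=\mathcal{I}_\omega(u)=\widetilde{m}_\omega=m_\omega$. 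I expect the step requiring the most care to be the existence of $\lambda_0$: one must verify the strict positivity of $\|\nabla u\|_{L^2}^2$ (which holds since nontrivial $H^1$ functions are nonconstant) and handle the borderline case $\lambda_0=1$, corresponding to $u$ already lying on the Pohozaev manifold, where the argument collapses to a tautology.
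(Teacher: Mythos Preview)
Your proof is correct and follows essentially the same approach as the paper: scale $T_\lambda$ onto the Pohozaev manifold $\{\mathcal{K}=0\}$ and use that $\lambda\mapsto\mathcal{I}_\omega(T_\lambda u)$ is increasing (because the two coefficients in (\ref{11/02/25/6:45}) are positive under $1+4/d<p<2^*-1$). The only cosmetic difference is that the paper argues via a minimizing sequence for $\widetilde m_\omega$ whereas you work with a single admissible $u$, which is slightly cleaner; your intermediate value argument for $\lambda_0$ is exactly the content of the paper's fact (\ref{11/04/09/16:06}).
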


We state our main theorems: 
\begin{theorem}\label{11/02/25/6:59}
Assume $d\ge 4$. Then, for any $\omega>0$ and $\mu>0$, there exists a minimizer of the variational problem for $m_{\omega}$. We also have $m_{\omega}=\widetilde{m}_{\omega}>0$.
\end{theorem}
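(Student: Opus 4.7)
My plan is to produce a minimizer of the auxiliary problem $\widetilde{m}_{\omega}$ and then invoke Proposition~\ref{11/04/09/15:32}. Under the standing hypothesis $1+\tfrac{4}{d}<p<2^{*}-1$, all three coefficients in $\mathcal{I}_{\omega}$ are positive so $\mathcal{I}_{\omega}$ is a sum of nonnegative terms, and the constraint $\mathcal{K}(u)\le 0$ is stable under Schwarz symmetrization (which does not increase $\|\nabla u\|_{L^{2}}^{2}$ while preserving the $L^{q}$-norms); hence I may restrict to radial nonincreasing minimizing sequences. Positivity $\widetilde{m}_{\omega}>0$ is then routine: the constraint $\mathcal{K}(u)\le 0$ combined with Gagliardo--Nirenberg and Sobolev gives
\[
2\|\nabla u\|_{L^{2}}^{2} \ \le\ C\,\|u\|_{L^{2}}^{\,p+1-\frac{d(p-1)}{2}}\|\nabla u\|_{L^{2}}^{\frac{d(p-1)}{2}}+C'\,\|\nabla u\|_{L^{2}}^{2^{*}},
\]
with both exponents of $\|\nabla u\|_{L^{2}}$ on the right strictly greater than $2$. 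Since $\|u\|_{L^{2}}$ is controlled by $\omega^{-1}\mathcal{I}_{\omega}(u)$, this forces $\|\nabla u\|_{L^{2}}$ away from zero on a minimizing sequence, so $\widetilde{m}_{\omega}>0$.

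The crucial step, and what I expect to be the main obstacle, is the strict upper bound
\[
\widetilde{m}_{\omega}\ <\ \tfrac{2}{d}\,S^{d/2},
\]
where $S$ denotes the best constant in the Sobolev embedding $\dot{H}^{1}(\mathbb{R}^{d})\hookrightarrow L^{2^{*}}(\mathbb{R}^{d})$. A direct computation (done already in the positivity step, applied in the limit where the subcritical part disappears) shows that $\tfrac{2}{d}S^{d/2}$ is the infimum of $\mathcal{I}_{\omega}$ over the purely critical constraint $\mathcal{K}_{\infty}(u):=2\|\nabla u\|_{L^{2}}^{2}-2\|u\|_{L^{2^{*}}}^{2^{*}}\le 0$, i.e.\ the natural compactness threshold corresponding to a single escaping Aubin--Talenti bubble. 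To prove the strict inequality I plug in a smoothly cut off Aubin--Talenti profile $\phi W_{\varepsilon}$ at scale $\varepsilon$, apply $T_{\lambda}$ to project onto $\mathcal{K}=0$, and expand $\mathcal{I}_{\omega}$ of the result as $\varepsilon\downarrow 0$: the positive cutoff error from the critical part is $O(\varepsilon^{d-2})$, while the subcritical correction contributes negatively at order $\varepsilon^{d-d(p-1)/2}$ (with a logarithmic factor on the $L^{2}$-piece when $d=4$), and the hypothesis $d\ge 4$ together with $p>1+\tfrac{4}{d}$ is exactly what makes the subcritical gain dominate the critical loss for small $\varepsilon$. This Brezis--Nirenberg-type test-function calculation is the reason the restriction $d\ge 4$ appears.

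Granting the strict bound, let $\{u_{n}\}$ be a radial nonincreasing minimizing sequence, bounded in $H^{1}$ by the first step, with weak limit $u$; by the Strauss radial compact embedding, $u_{n}\to u$ strongly in $L^{p+1}$. If $u\equiv 0$, then $\mathcal{K}(u_{n})\le 0$ combined with $\|u_{n}\|_{L^{p+1}}\to 0$ gives $\mathcal{K}_{\infty}(u_{n})\le o(1)$; Sobolev forces $\|\nabla u_{n}\|_{L^{2}}^{2}\ge S^{d/2}+o(1)$ and hence $\mathcal{I}_{\omega}(u_{n})\ge\tfrac{2}{d}S^{d/2}+o(1)$, contradicting the strict upper bound. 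So $u\not\equiv 0$. Setting $v_{n}:=u_{n}-u$, Brezis--Lieb applied to $\|\cdot\|_{L^{2}}^{2}$, $\|\nabla\cdot\|_{L^{2}}^{2}$, and $\|\cdot\|_{L^{2^{*}}}^{2^{*}}$ together with strong $L^{p+1}$ convergence yields
\[
\mathcal{I}_{\omega}(u_{n})=\mathcal{I}_{\omega}(u)+\mathcal{I}_{\omega}(v_{n})+o(1),\qquad \mathcal{K}(u_{n})=\mathcal{K}(u)+\mathcal{K}_{\infty}(v_{n})+o(1).
\]
If $\mathcal{K}(u)>0$ were to occur, then $\mathcal{K}_{\infty}(v_{n})<0$ asymptotically, Sobolev would again force $\mathcal{I}_{\omega}(v_{n})\ge\tfrac{2}{d}S^{d/2}$, and the decomposition would give $\mathcal{I}_{\omega}(u)\le\widetilde{m}_{\omega}-\tfrac{2}{d}S^{d/2}<0$, which is impossible. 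Hence $\mathcal{K}(u)\le 0$, so $\mathcal{I}_{\omega}(u)\ge\widetilde{m}_{\omega}$; combined with $\mathcal{I}_{\omega}(u)\le\widetilde{m}_{\omega}$ from the decomposition and the nonnegativity of $\mathcal{I}_{\omega}(v_{n})$, $u$ is a minimizer of $\widetilde{m}_{\omega}$, and by Proposition~\ref{11/04/09/15:32} also of $m_{\omega}=\widetilde{m}_{\omega}$.
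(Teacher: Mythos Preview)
Your strategy is the paper's: establish the strict Brezis--Nirenberg bound $\widetilde{m}_{\omega}<\tfrac{2}{d}S^{d/2}$ by inserting cut-off Talenti profiles $w_{\varepsilon}$ rescaled via $T_{\lambda}$ onto $\mathcal{K}=0$, then run concentration-compactness on a symmetrized minimizing sequence using radial $L^{p+1}$ compactness and Brezis--Lieb splitting of $\mathcal{I}_{\omega}$ and $\mathcal{K}$.

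One computational slip to fix: the subcritical gain is of order $\varepsilon^{\,d-(d-2)(p+1)/2}$ (the $\varepsilon$-homogeneity of $\|W_{\varepsilon}\|_{L^{p+1}}^{p+1}$), not $\varepsilon^{\,d-d(p-1)/2}$. With your exponent the comparison with the $L^{2}$ contribution $\omega\|w_{\varepsilon}\|_{L^{2}}^{2}\sim\varepsilon^{2}$ can go the wrong way (for instance $d=5$, $p$ close to $1+\tfrac{4}{d}=\tfrac{9}{5}$ gives your exponent near $3>2$), whereas the correct exponent satisfies $d-(d-2)(p+1)/2<2$ for every $p>1$, and the argument then works uniformly for $d\ge 4$. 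Your handling of the case $\mathcal{K}(u)>0$---invoking the Sobolev threshold a second time to force $\mathcal{I}_{\omega}(v_{n})\ge\tfrac{2}{d}S^{d/2}$ and hence $\mathcal{I}_{\omega}(u)<0$---is a harmless variant of the paper's, which instead rescales $u_{n}-u$ onto $\mathcal{K}=0$ to deduce $\widetilde{m}_{\omega}\le\mathcal{I}_{\omega}(u_{n}-u)$ and thus $\mathcal{I}_{\omega}(u)\le 0$ directly.
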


\begin{theorem}\label{11/04/22/10:26}
Assume $d=3$. If $\mu$ is sufficiently small, then we have;
\\[6pt]
{\rm (i)} the variational problem for $m_{\omega}$ has no minimizer. 
\\[6pt]
{\rm (ii)} there is no ground state for the equation (\ref{11/02/25/6:42}). 
\end{theorem}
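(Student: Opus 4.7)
The plan is to argue by comparison with the threshold $\frac{2}{d}S^{d/2}=\frac{2}{3}S^{3/2}$ (where $S$ is the sharp Sobolev constant and $W$ the Aubin-Talenti extremizer of $\|\cdot\|_{L^{6}}^{2}\le S^{-1}\|\nabla\cdot\|_{L^{2}}^{2}$), paralleling the dimension-three obstruction in the Brezis-Nirenberg problem. The decisive feature of $d=3$ is that $W\notin L^{2}(\mathbb{R}^{3})$, and for $\mu$ small the subcritical perturbation is too weak to push $m_{\omega}$ strictly below that threshold. By Proposition~\ref{11/06/15/16:19}, any minimizer of $m_{\omega}$ is a ground state, so (ii) implies (i); I would therefore aim directly at the non-existence of a ground state.

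First I would record the identities available for any $H^{1}$-solution $Q$ of \eqref{11/02/25/6:42}. Testing the equation against $Q$ and against $x\cdot\nabla Q$ yields a Nehari and a Pohozaev identity, whose combination in $d=3$ gives
\[
\omega\|Q\|_{L^{2}}^{2}=\frac{5-p}{2(p+1)}\mu\|Q\|_{L^{p+1}}^{p+1},\qquad
\mathcal{S}_{\omega}(Q)=\frac{2}{3}\|\nabla Q\|_{L^{2}}^{2}.
\]
The first forces the $L^{2}$-mass of any solution to shrink with $\mu$; the second reduces the action of a solution to a multiple of its Dirichlet norm. For the upper bound on $m_{\omega}$, using $m_{\omega}=\widetilde{m}_{\omega}$ from Proposition~\ref{11/04/09/15:32}, I would evaluate $\mathcal{I}_{\omega}$ on truncated Aubin-Talenti bubbles $\phi(x)\varepsilon^{-1/2}W(x/\varepsilon)$ and use the standard Brezis-Nirenberg expansions in $d=3$ ($\|\nabla\cdot\|_{L^{2}}^{2}=S^{3/2}+O(\varepsilon)$, $\|\cdot\|_{L^{6}}^{6}=S^{3/2}+O(\varepsilon^{3})$, $\|\cdot\|_{L^{2}}^{2}=O(\varepsilon)$, $\|\cdot\|_{L^{p+1}}^{p+1}=O(\varepsilon^{(5-p)/2})$); after a $T_{\lambda}$-adjustment into the admissible set $\{\mathcal{K}\le 0\}$, letting $\varepsilon\to 0$ gives $m_{\omega}\le \frac{2}{3}S^{3/2}$.

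For the matching lower bound on solutions I would chain the sharp Sobolev inequality $\|Q\|_{L^{6}}^{6}\le S^{-3}\|\nabla Q\|_{L^{2}}^{6}$ with the relation $\|\nabla Q\|_{L^{2}}^{2}-\|Q\|_{L^{6}}^{6}=\frac{3(p-1)\mu}{2(p+1)}\|Q\|_{L^{p+1}}^{p+1}$ coming from $\mathcal{K}(Q)=0$, together with a Gagliardo-Nirenberg interpolation $\|Q\|_{L^{p+1}}^{p+1}\le C\|Q\|_{L^{2}}^{(5-p)/2}\|\nabla Q\|_{L^{2}}^{3(p-1)/2}$ fed by the Pohozaev mass relation. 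Eliminating $\|Q\|_{L^{2}}$ and $\|Q\|_{L^{p+1}}$ yields
\[
\|Q\|_{L^{p+1}}^{p+1}\le C\mu^{(5-p)/(p-1)}\|\nabla Q\|_{L^{2}}^{6},\qquad
\|\nabla Q\|_{L^{2}}^{2}\ge S^{3/2}\bigl(1-C\mu^{4/(p-1)}\bigr),
\]
so every $H^{1}$-solution has $\|Q\|_{L^{2}}^{2}\le C\mu^{4/(p-1)}\|\nabla Q\|_{L^{2}}^{6}\to 0$ while $\|\nabla Q\|_{L^{2}}^{2}$ and $\|Q\|_{L^{6}}^{6}$ both approach $S^{3/2}$ as $\mu\to 0$.

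To conclude (ii) I would argue by contradiction: suppose a ground state $Q$ exists along a sequence $\mu\downarrow 0$. The preceding estimates force $Q$ to approximately saturate the sharp Sobolev inequality on $\mathbb{R}^{3}$, so a profile decomposition in $\dot{H}^{1}(\mathbb{R}^{3})$ applied to $Q$ (after translation and Aubin-Talenti-type $\dot{H}^{1}$-rescaling) must extract $W$ as the single surviving profile with vanishing $\dot{H}^{1}$-remainder. Combining this single-bubble ansatz with the equation and with the Pohozaev bound $\|Q\|_{L^{2}}^{2}\le C\mu^{4/(p-1)}$ then yields the contradiction: the three-dimensional $L^{2}$-non-integrability of $W$ is incompatible with the uniform $L^{2}$-smallness that the solution inherits from Pohozaev, for $\mu$ sufficiently small. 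The main obstacle is precisely this rigidity step, namely upgrading the quantitative near-saturation of Sobolev into a genuine Aubin-Talenti profile (via Bianchi-Egnell type stability or a refined profile decomposition) and exploiting the three-dimensional $L^{2}$-obstruction against the $O(\mu^{4/(p-1)})$ slack in the lower bound. Once established, (ii) holds for $\mu$ small, and (i) is immediate from Proposition~\ref{11/06/15/16:19}.
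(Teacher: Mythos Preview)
Your Pohozaev bookkeeping and the lower bound $\|\nabla Q\|_{L^2}^2\ge S^{3/2}(1-C\mu^{4/(p-1)})$ are correct. But the step you yourself flag as ``the main obstacle'' is a genuine gap, not a technicality. Bianchi--Egnell or a profile decomposition only gives closeness of $Q$ to some rescaled bubble $W_{\lambda,y}$ in $\dot H^1$, and because $W\notin L^2(\mathbb{R}^3)$ this carries no $L^2$-information: a sequence in $H^1(\mathbb{R}^3)$ can sit $\dot H^1$-close to bubbles at shrinking scales while its $L^2$-norm tends to zero. So ``the $L^2$-non-integrability of $W$ is incompatible with the uniform $L^2$-smallness'' is not a contradiction as written. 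To make it one you would have to control the concentration scale $\lambda_\mu$ quantitatively through the equation and play it against the $O(\mu^{4/(p-1)})$ mass bound; none of this is carried out. There is a secondary gap as well: your route is ``prove (ii), deduce (i),'' but to get near-saturation of Sobolev you need the matching upper bound $\|\nabla Q\|_{L^2}^2\le S^{3/2}$, which follows from $\mathcal{S}_\omega(Q)=\tfrac{2}{3}\|\nabla Q\|_{L^2}^2=m_\omega\le\tfrac{2}{3}S^{3/2}$ only when $Q$ is a \emph{minimizer}; for an arbitrary ground state you have not produced that bound.

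The paper's argument is completely different and avoids concentration-compactness entirely. After symmetrizing a putative minimizer to a radial $C^2$ solution $u^*$ and setting $\phi(r)=r\,u^*(r)$ (which in $d=3$ satisfies a simple second-order ODE), it tests that ODE against $g\phi'$ and $\tfrac12 g'\phi$ with the specific weight $g(r)=\tfrac12(1-e^{-2r})$, chosen so that $g(0)=0$ and $g'''-4g'=0$. Those two conditions annihilate the boundary term and the $\omega\phi^2$ contribution, leaving
\[
0=\mu\int_0^\infty\Bigl(\tfrac{p-1}{p+1}g-\tfrac{p+3}{2(p+1)}rg'\Bigr)r^{-p}|\phi|^{p+1}
+\tfrac{2}{3}\int_0^\infty(g-rg')\,r^{-5}|\phi|^{6}.
\]
Since $g-rg'\ge 0$, the second integral dominates a fixed multiple of $\|u^*\|_{L^6}^6$, and a uniform-in-$\mu$ $H^1$-bound (Lemma~\ref{11/04/30/11:47}) together with $\mathcal{K}(u^*)=0$ keeps $\|u^*\|_{L^6}^6$ bounded away from zero; the first integral is $O(1)$. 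Sending $\mu\to 0$ gives the contradiction.
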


We see from Theorem \ref{11/02/25/6:59} together with Proposition \ref{11/06/15/16:19} that when $d\ge 4$, a ground state exists for any $\omega>0$ and $\mu>0$.\par 
We give the proofs of Proposition \ref{11/04/09/15:32} and Theorem \ref{11/02/25/6:59} in Section \ref{11/04/09/15:40}. The proof of Theorem \ref{11/02/25/6:59} is based on the idea of Brezis and Nirenberg \cite{Bre-Ni}. Since we consider the variational problem different from theirs, we need a little  different argument in the estimate of $m_{\omega}$ (see Lemma \ref{11/04/09/15:33}). We give the proof of Theorem \ref{11/04/22/10:26} in  Section \ref{11/04/21/1:27}. 
\par 
Next, we move to the blowup problem. Using the variational value $m_{\omega}$, we define the set $A_{\omega,-}$ by
\begin{equation}\label{11/06/14/9:47}
A_{\omega,-}:=\left\{ 
u \in H^{1}(\mathbb{R}^{d})
\bigm| 
\mathcal{S}_{\omega}(u)<m_{\omega},
\ 
\mathcal{K}(u)<0
\right\}.
\end{equation} 
We can see that $A_{\omega,-}$ is invariant under the flow defined by (\ref{11/03/03/7:11}) (see Lemma \ref{10/12/23/11:48} below). Furthermore, we have the following result: 
\begin{theorem}\label{11/06/14/9:50}
Assume $d\ge 4$, $\omega>0$ and $\mu>0$. Let $\psi$ be a solution to (\ref{11/03/03/7:11}) starting from $A_{\omega,-}$, and let $I_{\max}$ be the maximal interval on which $\psi$ exists. If $\psi$ is radially symmetric, then $I_{\max}$ is bounded.
\end{theorem}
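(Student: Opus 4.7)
The plan is to derive a Glassey-type convexity obstruction for a localized second moment: since the datum is not assumed to have finite variance and the nonlinearity is energy-critical, the classical Glassey virial is unavailable, but an Ogawa--Tsutsumi localized virial combined with the radial Strauss inequality should close the argument. The overall architecture is: (a) propagate membership in $A_{\omega,-}$ throughout $I_{\max}$; (b) upgrade $\mathcal{K}(\psi(t))<0$ to a \emph{uniform} strict-negativity gap $\mathcal{K}(\psi(t))\le -\delta$; (c) combine with a truncated second-moment identity to force a contradiction if $I_{\max}$ were unbounded.

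For (a) I would invoke Lemma~\ref{10/12/23/11:48} (invariance of $A_{\omega,-}$), whose proof reduces to conservation of $\mathcal{S}_{\omega}$ together with the observation that, at any hypothetical time $t_{*}$ where $\mathcal{K}(\psi(t_{*}))$ first vanishes with $\psi(t_{*})\neq 0$, one would have $\mathcal{S}_{\omega}(\psi(t_{*}))\ge m_{\omega}$, contradicting $\mathcal{S}_{\omega}(\psi_{0})<m_{\omega}$. For (b), the crucial tool is Proposition~\ref{11/04/09/15:32}: since $\mathcal{K}(\psi(t))\le 0$ and $\psi(t)\not\equiv 0$, we have $\mathcal{I}_{\omega}(\psi(t))\ge \widetilde{m}_{\omega}=m_{\omega}$. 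Combining this with the defining identity $\mathcal{I}_{\omega}=\mathcal{S}_{\omega}-\frac{2}{d(p-1)}\mathcal{K}$ and conservation of $\mathcal{S}_{\omega}$ yields the pointwise bound
\[
\mathcal{K}(\psi(t))\le -\frac{d(p-1)}{2}\bigl(m_{\omega}-\mathcal{S}_{\omega}(\psi_{0})\bigr)=:-\delta<0
\qquad \text{for all } t\in I_{\max}.
\]

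For (c), fix a radial cutoff $\phi\in C^{\infty}(\mathbb{R}^{d})$ with $\phi(x)=|x|^{2}$ for $|x|\le 1$, $\phi$ constant for $|x|\ge 2$, and such that the radial Hessian $\partial_{r}^{2}\phi\le 2$ and $\Delta\phi\le 2d$ everywhere. Set $\phi_{R}(x):=R^{2}\phi(x/R)$ and $V_{R}(t):=\int\phi_{R}(x)|\psi(t,x)|^{2}dx$. A standard computation (keeping track of the factor $2$ in $2i\partial_{t}\psi$) produces
\[
\ddot{V}_{R}(t)= 4\,\mathcal{K}(\psi(t))+E_{R}(t),
\]
where $E_{R}(t)$ is concentrated on $\{|x|\ge R\}$ and is a combination of $\int_{|x|\ge R}|\nabla \psi|^{2}$, $\int_{|x|\ge R}|\psi|^{p+1}$, $\int_{|x|\ge R}|\psi|^{2^{*}}$, together with an $O(R^{-2}\|\psi_{0}\|_{L^{2}}^{2})$ term coming from $\Delta^{2}\phi_{R}$. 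Using the radial Strauss bound $\|\psi(t)\|_{L^{\infty}(|x|\ge R)}\le C R^{-(d-1)/2}\|\psi(t)\|_{H^{1}}^{1/2}\|\psi(t)\|_{L^{2}}^{1/2}$ and mass conservation, I would peel off the subcritical tail directly and then absorb any residual $H^{1}$-factor coming from the critical tail back into the coefficient of $\|\psi(t)\|_{L^{2^{*}}}^{2^{*}}$ sitting inside $-\mathcal{K}(\psi(t))$ via Young's inequality; for $R$ sufficiently large this yields $E_{R}(t)\le -2\mathcal{K}(\psi(t))$, whence $\ddot V_{R}(t)\le 2\mathcal{K}(\psi(t))\le -2\delta<0$ uniformly on $I_{\max}$. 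Since $V_{R}\ge 0$, if $I_{\max}$ were unbounded we would get $V_{R}(t)\le V_{R}(0)+\dot V_{R}(0)t -\delta t^{2}\to -\infty$, a contradiction.

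The main obstacle is the critical tail $\int_{|x|\ge R}|\psi|^{2^{*}}$ inside $E_{R}$: Strauss gives decay $r^{-(d-1)/2}$ but at the price of an $\|\psi\|_{H^{1}}^{2^{*}-2}$ factor that is not a priori uniformly bounded on $I_{\max}$, since we are in the $\mathcal{K}<0$ regime and $\|\nabla\psi(t)\|_{L^{2}}$ could grow. The absorption step must therefore use that $2^{*}-2=4/(d-2)\le 2$ precisely when $d\ge 4$ — this is exactly what lets Young's inequality trade the dangerous factor against the already-present summand $2\|\psi(t)\|_{L^{2^{*}}}^{2^{*}}$ in $-\mathcal{K}(\psi(t))$, with a prefactor that can be made arbitrarily small by taking $R$ large. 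This is the dimensional threshold underlying the hypothesis $d\ge 4$ in the statement.
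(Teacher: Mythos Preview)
Your steps (a) and (b) are fine; in fact your derivation of the uniform gap $\mathcal{K}(\psi(t))\le -\frac{d(p-1)}{2}\bigl(m_{\omega}-\mathcal{S}_{\omega}(\psi_{0})\bigr)$ via $\mathcal{I}_{\omega}(\psi(t))\ge\widetilde m_{\omega}=m_{\omega}$ is a clean alternative to the paper's concavity argument in Lemma~\ref{10/12/23/11:48}.

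The gap is in (c). Your absorption $E_{R}(t)\le -2\mathcal{K}(\psi(t))$ does not close. After Strauss and Young, the critical tail contributes (at best) a term of size $\epsilon\|\nabla\psi(t)\|_{L^{2}}^{2}$ plus an $R$-small remainder. Write $N(t):=\mu\frac{d(p-1)}{p+1}\|\psi(t)\|_{L^{p+1}}^{p+1}+2\|\psi(t)\|_{L^{2^{*}}}^{2^{*}}$, so that $\mathcal{K}=2\|\nabla\psi\|_{L^{2}}^{2}-N$. Even if you first use $\mathcal{K}<0$ to convert $\epsilon\|\nabla\psi\|^{2}$ into $\tfrac{\epsilon}{2}N$, you obtain
\[
\ddot V_{R}\le 8\|\nabla\psi\|_{L^{2}}^{2}-(4-\epsilon)N+o_{R}(1),
\]
and substituting $N\ge 2\|\nabla\psi\|_{L^{2}}^{2}+\delta$ leaves a residual $+2\epsilon\|\nabla\psi(t)\|_{L^{2}}^{2}$ on the right-hand side. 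In the regime $\mathcal{K}<0$ there is no uniform bound on $\|\nabla\psi(t)\|_{L^{2}}$, so this term cannot be made harmless by choosing $R$ large. The observation that $2^{*}-2\le 2$ for $d\ge 4$ is indeed what makes Young's inequality applicable, but Young only converts the power of $\|\nabla\psi\|$; it does not remove the dependence.

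The paper circumvents this by a bootstrap on the \emph{exterior mass}. It introduces the truncated functional $\mathcal{K}^{R}$ and proves (Step~1) via a weighted Strauss inequality that whenever $\mathcal{K}^{R}(v)\le -\varepsilon_{0}/4$ and $\|v\|_{L^{2}}\le\|\psi_{0}\|_{L^{2}}$, one must have $\int_{|x|\ge R}|v|^{2}\ge m_{*}>0$. The crucial technical point is that after Young the weighted gradient term carries the coefficient $\|v\|_{L^{2}(|x|\ge R)}^{2}$ (not $\|v\|_{L^{2}}^{2}$), so it can be absorbed into the localized kinetic term provided the exterior mass is small. Step~2 then runs a continuity argument to propagate $\int_{|x|\ge R}|\psi(t)|^{2}\le m_{*}$ on all of $I_{\max}$; by Step~1 this forces $\mathcal{K}^{R}(\psi(t))\ge -\varepsilon_{0}/4$ throughout, and the generalized virial identity then gives the uniformly negative second derivative you wanted. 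In short, the missing idea is to use the exterior $L^{2}$-mass, rather than $\epsilon$, as the smallness parameter in the absorption, and to control that mass by bootstrap.
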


We will prove Theorem \ref{11/06/14/9:50} in Section \ref{11/06/14/10:06}.
\par 

Besides the blowup result, we can also prove the scattering result; Put 
\begin{equation}\label{11/06/17/18:08}
A_{\omega,+}:= \left\{ 
u \in H^{1}(\mathbb{R}^{d})
\bigm| 
\mathcal{S}_{\omega}(u)<m_{\omega},
\ 
\mathcal{K}(u)>0
\right\}.
\end{equation} 
Then, we have
\begin{theorem}\label{11/06/17/18:10}
Assume $d\ge 5$, $\omega>0$ and $\mu>0$. Let $\psi$ be a solution to (\ref{11/03/03/7:11}) starting from $A_{\omega,+}$. Then, $\psi$ exists globally in time and satisfies  
\begin{equation}\label{11/06/17/18:17}
\psi(t) \in A_{\omega,+}
\qquad 
\mbox{for any $t \in \mathbb{R}$},
\end{equation}
and there exists $\phi_{+},\phi_{-} \in H^{1}(\mathbb{R}^{d})$ such that 
\begin{equation}\label{11/06/17/18:19}
\lim_{t\to +\infty}
\left\| \psi(t)-e^{\frac{i}{2}t\Delta}\phi_{+}\right\|_{H^{1}}
=
\lim_{t\to -\infty}
\left\| \psi(t)-e^{\frac{i}{2}t\Delta}\phi_{-}\right\|_{H^{1}}
=0.
\end{equation}
\end{theorem}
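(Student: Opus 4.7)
The plan is to apply the Kenig-Merle concentration-compactness/rigidity scheme below the ground-state threshold $m_\omega$.

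The first step is to establish invariance of $A_{\omega,+}$ under the flow and a uniform $H^{1}$ bound. The sign of $\mathcal{K}(\psi(t))$ is preserved along the $H^{1}$ flow because at any zero of $\mathcal{K}$ the function $\psi(t)$ would be admissible for the infimum defining $m_\omega$, forcing $\mathcal{S}_\omega(\psi(t)) \geq m_\omega$ and contradicting conservation of $\mathcal{S}_\omega$ together with the sub-threshold assumption $\mathcal{S}_\omega(\psi_0) < m_\omega$. Coercivity follows from the decomposition $\mathcal{S}_\omega = \mathcal{I}_\omega + \frac{2}{d(p-1)}\mathcal{K}$: on $\{\mathcal{K}\ge 0\}$, $\mathcal{I}_\omega(\psi(t)) \leq \mathcal{S}_\omega(\psi_0) < m_\omega$, and the three coefficients of $\mathcal{I}_\omega$ in (\ref{11/02/25/6:45}) are strictly positive under $1+4/d < p < 2^{*}-1$, giving a uniform bound on $\|\psi(t)\|_{H^{1}}^{2} + \|\psi(t)\|_{L^{2^{*}}}^{2^{*}}$. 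Global existence in $H^{1}$ and (\ref{11/06/17/18:17}) follow.

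For the scattering claim I would argue by contradiction. Small-data $H^{1}$-scattering holds by a standard Strichartz contraction argument for the combined nonlinearity (the subcritical term $\mu|\psi|^{p-1}\psi$ being handled perturbatively), so the infimum
\[
m_c := \inf\{\mathcal{S}_\omega(\psi_0) : \psi_0 \in A_{\omega,+},\ \psi \text{ fails to scatter in some time direction}\}
\]
is strictly positive. Assume $m_c < m_\omega$. A Bahouri-G\'erard/Keraani linear profile decomposition in $H^{1}$, combined with a long-time stability lemma for the critical/subcritical NLS, extracts from a near-minimizing sequence of non-scattering solutions a single surviving nonlinear profile: a \emph{critical element} $\psi_c \in A_{\omega,+}$ with $\mathcal{S}_\omega(\psi_c) = m_c$, that does not scatter, and whose two-sided $H^{1}$-orbit is pre-compact modulo space translations (the scaling parameters are frozen because the mass term $\omega\|u\|_{L^{2}}^{2}$ in $\mathcal{I}_\omega$ controls the $L^{2}$ norm uniformly, ruling out concentration).

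The main obstacle is the \emph{rigidity} step, ruling out such a $\psi_c$. The sub-threshold hypothesis together with Proposition \ref{11/04/09/15:32} yields a uniform positive lower bound $\mathcal{K}(\psi_c(t)) \geq \delta > 0$ for all $t \in \mathbb{R}$. With $\phi_R(x) = R^{2}\phi(x/R)$ a smooth radial cutoff of $|x|^{2}$, the standard virial computation gives
\[
\frac{d^{2}}{dt^{2}} \int \phi_R(x) |\psi_c(t,x)|^{2}\, dx = 4 \mathcal{K}(\psi_c(t)) + o_R(1),
\]
where the error is uniform in $t$ and vanishes as $R \to \infty$ thanks to pre-compactness of the orbit. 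Integrating once on $[0,T]$ and using that the first derivative is bounded by $\lesssim_R 1$ via conservation of mass and the uniform $H^{1}$ bound, one obtains $\delta T \lesssim_R 1$ after first choosing $R$ large and then sending $T \to \infty$, a contradiction. This forces $m_c \geq m_\omega$ and completes the proof. The restriction $d \geq 5$ enters in the non-radial linear profile decomposition and in the pointwise control of the virial cutoff errors, where the Strichartz exponents in higher dimensions allow a clean argument; the cases $d=3,4$ would require Dodson-type refinements not pursued here.
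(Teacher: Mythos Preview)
The paper does not prove this theorem at all: immediately after the statement it reads ``We will give the proof of Theorem \ref{11/06/17/18:10} in a forthcoming paper.'' So there is no proof in the paper to compare your proposal against. Your outline is the expected Kenig--Merle scheme, and the paper's closing remarks (citing \cite{Kenig-Merle} and \cite{Killip-Visan}) confirm that this is the intended route.

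That said, there is a genuine gap in your sketch, precisely at the point where you explain the restriction $d\ge 5$. You write that ``the scaling parameters are frozen because the mass term $\omega\|u\|_{L^{2}}^{2}$ in $\mathcal{I}_\omega$ controls the $L^{2}$ norm uniformly, ruling out concentration.'' This is not correct. Uniform mass control on the sequence does \emph{not} prevent concentrating bubbles in the $\dot H^{1}$ profile decomposition: a profile living at scale $\lambda_{n}\to 0$ carries vanishing $L^{2}$ and $L^{p+1}$ norms, so the mass and subcritical terms see nothing, yet such a profile can still carry a nontrivial share of $\|\nabla u_{n}\|_{L^{2}}^{2}$ and $\|u_{n}\|_{L^{2^{*}}}^{2^{*}}$. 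In that regime the associated nonlinear profile is, to leading order, a solution of the \emph{pure} energy-critical NLS, and to run the stability/perturbation argument you must know that this pure critical solution scatters below the Aubin--Talenti threshold. That is exactly Killip--Visan \cite{Killip-Visan}, which is available only for $d\ge 5$ in the non-radial setting; the paper says this explicitly in the paragraph following Theorem \ref{11/06/17/18:10}. Your stated reasons for the dimensional restriction (Strichartz exponents, virial cutoff errors) are not the operative ones.

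So the invariance/coercivity step and the rigidity step are fine as sketched, but the profile-decomposition step needs to be rewritten to allow a dichotomy between non-concentrating profiles (solving the full equation \eqref{11/03/03/7:11}) and concentrating profiles (solving the pure critical equation, handled via \cite{Killip-Visan}), together with the observation that $m_{\omega}<\frac{2}{d}\sigma^{d/2}$ from Lemma \ref{11/04/09/15:33} keeps the concentrating profiles strictly below the critical threshold.
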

We will give the proof of Theorem \ref{11/06/17/18:10} in a forthcoming paper. In fact, we will prove the scattering result for a more general nonlinearity. 
\par 
Finally, we refer to relations of our work to the previous ones. 
 Theorems \ref{11/06/14/9:50} and \ref{11/06/17/18:10} are extensions of the blow-up and scattering results in \cite{Akahori-Nawa} to the equation including the energy-critical term. On the other hand, Kenig and Merle \cite{Kenig-Merle} proved the scattering and blow-up results, like Theorems \ref{11/06/14/9:50} and \ref{11/06/17/18:10} above, for the equation \eqref{11/03/03/7:11} without the energy-subcritical term in the radial case. The radial assumption  is removed by Killip and Visan \cite{Killip-Visan} in the 
 dimensions five and higher. The three and four dimensional cases  remain open, which relates with the restriction $d\ge 5$ in Theorem \ref{11/06/17/18:10} above.  
       
%%%%%%
\section{Proofs of Proposition \ref{11/04/09/15:32} 
and Theorem \ref{11/02/25/6:59}}
\label{11/04/09/15:40}
%%%%%%
In this section, we prove Proposition \ref{11/04/09/15:32} and Theorem \ref{11/02/25/6:59}. In the proofs, we often use the following easy facts:
\begin{equation}
\label{11/04/09/16:06}
\begin{split}
&\mbox{For any $u \in H^{1}(\mathbb{R}^{d})\setminus \{0\}$, there exists  a unique $\lambda(u)>0$ such that}
\\
&\qquad 
\mathcal{K}(T_{\lambda}u)
\left\{
\begin{array}{ll}
>0 & \mbox{for any $0< \lambda <\lambda(u)$},
\\
=0 & \mbox{for $\lambda=\lambda(u)$},
\\
<0 & \mbox{for any $\lambda >\lambda(u)$}.
\end{array} 
\right.
\end{split}
\end{equation}
\begin{equation}
\label{11/04/09/16:07}
\frac{d}{d\lambda}\mathcal{S}_{\omega}(T_{\lambda}u)
=
\frac{1}{\lambda} \mathcal{K}(T_{\lambda}u)
\quad 
\mbox{for any $u \in H^{1}(\mathbb{R}^{d})$ and $\lambda>0$}.
\end{equation}
Moreover, we see from (\ref{11/04/09/16:06}) and (\ref{11/04/09/16:07}) that for any $u \in H^{1}(\mathbb{R}^{d})\setminus \{0\}$, 
\begin{align}
\label{11/06/17/9:20}
&\mathcal{S}_{\omega}(T_{\lambda}u) 
< 
\mathcal{S}_{\omega}(T_{\lambda (u)}u)
\qquad 
\mbox{for any $\lambda \neq \lambda(u)$},
\\[6pt]
\label{11/06/17/9:21}
&\mbox{$\mathcal{S}_{\omega}(T_{\lambda}u)$ is concave for 
$\lambda > \lambda(u)$}.
\end{align}

Let us begin with the proof of Proposition \ref{11/04/09/15:32}. 
\begin{proof}[Proof of Proposition \ref{11/04/09/15:32}]
We first show the claim (i): $m_{\omega}=\widetilde{m}_{\omega}$. Let $\{u_{n}\}$ be any minimizing sequence for $\widetilde{m}_{\omega}$, i.e., $\{u_{n}\}$ is a sequence in $H^{1}(\mathbb{R}^{d})\setminus \{0\}$ such that \begin{align}
\label{11/04/09/16:00}
&\lim_{n\to \infty}\mathcal{I}_{\omega}(u_{n})=\widetilde{m}_{\omega},
\\[6pt]
\label{11/04/09/16:01}
&\mathcal{K}(u_{n})\le 0 
\quad 
\mbox{for any $n \in \mathbb{N}$}.
\end{align} 
Using (\ref{11/04/09/16:06}), for each $n \in \mathbb{N}$, we can take $\lambda_{n}\in (0, 1]$ such that $\mathcal{K}(T_{\lambda_{n}}u_{n})=0$. Then, we have 
\begin{equation}
\label{11/04/09/16:43}
m_{\omega}
\le 
\mathcal{S}_{\omega}(T_{\lambda_{n}}u_{n})
=
\mathcal{I}_{\omega}(T_{\lambda_{n}}u_{n})
\le 
\mathcal{I}_{\omega}(u_{n})
=
\widetilde{m}_{\omega}+o_{n}(1), 
\end{equation} 
where we have used $\lambda_{n}\le 1$ to derive the second inequality. Hence, we obtain $m_{\omega}\le \widetilde{m}_{\omega}$. 
On the other hand, it follows from   
\begin{equation}\label{11/03/04/4:55}
\widetilde{m}_{\omega}\le \inf_{{u\in H^{1}
\setminus \{0\}}\atop {\mathcal{K}=0}}\mathcal{I}_{\omega}(u)
=
\inf_{{u\in H^{1}
\setminus \{0\}}\atop {\mathcal{K}=0}}
\mathcal{S}_{\omega}(u)
=
m_{\omega}
\end{equation}
that $\widetilde{m}_{\omega}\le m_{\omega}$. Thus, we have proved that $m=m_{\omega}$. 
\par 
Next, we shall show the claim (ii). Let $Q$ be any minimizer for $\widetilde{m}_{\omega}$, i.e., $Q \in H^{1}(\mathbb{R}^{d})\setminus \{0\}$ with $\mathcal{K}(Q)\le 0$ and $\mathcal{I}_{\omega}(Q)=\widetilde{m}_{\omega}$. Since $\mathcal{I}_{\omega}=\mathcal{S}_{\omega}-\frac{2}{d(p-1)}\mathcal{K}$ (see (\ref{11/02/25/6:45})) and $m_{\omega}=\widetilde{m}_{\omega}$, it is sufficient to show that $\mathcal{K}(Q)=0$. Suppose the contrary that $\mathcal{K}(Q)<0$, so that there exists $0<\lambda_{0}<1$ such that $\mathcal{K}(T_{\lambda_{0}}Q)=0$. Then, we have 
\begin{equation}\label{11/04/30/9:41}
\widetilde{m}_{\omega}\le \mathcal{I}_{\omega}(T_{\lambda_{0}}Q)
< \mathcal{I}_{\omega}(Q)=\widetilde{m}_{\omega},
\end{equation}
which is a contradiction. Hence, $\mathcal{K}(Q)=0$ and therefore $Q$ is also a minimizer for $m_{\omega}$. 
\end{proof}

Now, employing the idea of  Br\'ezis and Nirenberg \cite{Bre-Ni},  we prove Theorem \ref{11/02/25/6:59}. 
To this end, we introduce another variational value $\sigma$:
\begin{equation}\label{11/04/09/17:01}
\sigma
:=
\inf\left\{
\left\| \nabla u \right\|_{L^{2}}^{2}
\bigm| 
u \in \dot{H}^{1}(\mathbb{R}^{d}), 
\ 
\left\| u \right\|_{L^{2^{*}}}=1
\right\}.
\end{equation}
Then, we have;
\begin{proposition}\label{11/06/17/16:40}
Assume that for  $\omega>0$ and $\mu>0$, we have 
\begin{equation}\label{11/06/17/16:42}
m_{\omega}<\frac{2}{d}\sigma^{\frac{d}{2}}.
\end{equation}
Then, there exists a minimizer of the variational problem for $m_{\omega}$. 
\end{proposition}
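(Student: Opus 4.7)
The plan is to run a concentration-compactness argument in the style of Brezis-Nirenberg on a symmetrized minimizing sequence for $\widetilde{m}_\omega$, using the strict gap \eqref{11/06/17/16:42} to forbid the formation of a critical Sobolev bubble in the residual; by Proposition \ref{11/04/09/15:32} it suffices to produce a minimizer for $\widetilde{m}_\omega$. Let $\{u_n\}$ be a minimizing sequence for $\widetilde{m}_\omega$. Every norm appearing in $\mathcal{I}_\omega$ and $\mathcal{K}$ depends only on $|u|$, and $\|\nabla |u|\|_{L^{2}} \le \|\nabla u\|_{L^{2}}$, so after replacing $u_n$ by its Schwarz symmetric-decreasing rearrangement one may assume $u_n$ is radial, non-negative and radially non-increasing, while $\mathcal{I}_\omega(u_n)$ only decreases and $\mathcal{K}(u_n)\le 0$ is preserved. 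Under $1+\tfrac{4}{d}<p<2^{*}-1$, every coefficient in \eqref{11/02/25/6:45} is strictly positive, giving an $H^{1}$-bound on $\{u_n\}$; hence $u_n\rightharpoonup Q$ weakly in $H^{1}(\mathbb{R}^{d})$ along a subsequence and, by compactness of the radial Sobolev embedding, strongly in $L^{p+1}$. Setting $v_n:=u_n-Q$, weak convergence in $L^{2}$ and $\dot H^{1}$ together with the Brezis-Lieb lemma applied to the $L^{2^{*}}$-term yield
\begin{equation*}
\mathcal{I}_\omega(u_n) = \mathcal{I}_\omega(Q) + \mathcal{I}_\omega(v_n) + o(1),
\qquad
\mathcal{K}(u_n) = \mathcal{K}(Q) + 2\|\nabla v_n\|_{L^{2}}^{2} - 2\|v_n\|_{L^{2^{*}}}^{2^{*}} + o(1).
\end{equation*}

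The heart of the argument is to exclude a concentrating residual. Suppose, for contradiction, that $\mathcal{K}(Q)\ge 0$ (this covers the case $Q=0$). Then $\mathcal{K}(u_n)\le 0$ forces $\|\nabla v_n\|_{L^{2}}^{2}\le \|v_n\|_{L^{2^{*}}}^{2^{*}} + o(1)$, and combining with the Sobolev inequality $\sigma\|v_n\|_{L^{2^{*}}}^{2}\le \|\nabla v_n\|_{L^{2}}^{2}$ produces the dichotomy that, along a subsequence, either $\|\nabla v_n\|_{L^{2}}\to 0$ or $\|\nabla v_n\|_{L^{2}}^{2}\ge \sigma^{d/2}-o(1)$. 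In the non-vanishing branch, substituting $\|v_n\|_{L^{2^{*}}}^{2^{*}}\ge \|\nabla v_n\|_{L^{2}}^{2} - o(1)$ into the two positive terms of $\mathcal{I}_\omega(v_n)$ makes their coefficients telescope via
\begin{equation*}
\frac{d(p-1)-4}{d(p-1)} + \frac{4-(d-2)(p-1)}{d(p-1)} = \frac{2}{d},
\end{equation*}
so that $\mathcal{I}_\omega(v_n)\ge \tfrac{2}{d}\|\nabla v_n\|_{L^{2}}^{2} + o(1)\ge \tfrac{2}{d}\sigma^{d/2} + o(1)$; together with $\mathcal{I}_\omega(Q)\ge 0$ this forces $m_\omega\ge \tfrac{2}{d}\sigma^{d/2}$, contradicting \eqref{11/06/17/16:42}. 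Therefore $\|\nabla v_n\|_{L^{2}}\to 0$, hence $\|v_n\|_{L^{2^{*}}}\to 0$ by Sobolev, and passing to the limit in the decomposition gives $\mathcal{K}(Q)\le 0$ and $\mathcal{I}_\omega(Q)\le \widetilde{m}_\omega$, making $Q$ a minimizer provided $Q\ne 0$.

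The main obstacle is the coefficient arithmetic just above: the universal constant $\tfrac{2}{d}\sigma^{d/2}$ is produced by the telescoping identity, and the strict form of \eqref{11/06/17/16:42} is exactly what creates the room needed to close the contradiction. A secondary point is verifying $Q\ne 0$: if $u_n\rightharpoonup 0$, the preceding analysis forces $\|\nabla u_n\|_{L^{2}}\to 0$, but a Gagliardo-Nirenberg estimate combined with $p>1+\tfrac{4}{d}$ shows that $\|u_n\|_{L^{p+1}}^{p+1}$ and $\|u_n\|_{L^{2^{*}}}^{2^{*}}$ are then $o(\|\nabla u_n\|_{L^{2}}^{2})$, so the positive gradient term dominates in $\mathcal{K}(u_n)$, contradicting $\mathcal{K}(u_n)\le 0$ for large $n$.
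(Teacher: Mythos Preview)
Your argument is essentially correct and follows the same Brezis--Nirenberg strategy as the paper: symmetrize, split via Brezis--Lieb, and use the strict threshold \eqref{11/06/17/16:42} to rule out a critical Sobolev bubble in the residual; the paper organizes the case analysis through the $L^{2}$-scaling $T_{\lambda}$ (first proving $Q\neq 0$, then ruling out $\mathcal{K}(Q)<0$ and $\mathcal{K}(Q)>0$ separately by rescaling) rather than your direct Sobolev dichotomy on $v_{n}$, but the substance is the same. Two small expository issues: the phrase ``for contradiction'' is misplaced, since in the vanishing branch you obtain $\mathcal{K}(Q)=0$ rather than a contradiction, and you never treat the case $\mathcal{K}(Q)<0$, which is immediate---there $Q$ is already admissible for $\widetilde{m}_{\omega}$ and $\mathcal{I}_{\omega}(Q)\le\widetilde{m}_{\omega}$ from the Brezis--Lieb splitting, so $Q$ is a minimizer.
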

\begin{proof}[Proof of Proposition \ref{11/06/17/16:40}]
%%%%%%
Let $\{u_{n}\}$ be a minimizing sequence of the variational problem  for $\widetilde{m}_{\omega}$. Extracting some subsequence, we may assume that 
\begin{equation}\label{11/04/10/17:40}
\mathcal{I}_{\omega}(u_{n}) \le 1+\widetilde{m}_{\omega}
\qquad 
\mbox{for any $n \in \mathbb{N}$}.
\end{equation}
We denote the Schwarz symmetrization of $u_{n}$ by $u_{n}^{*}$. Then, we easily  see that  
\begin{align}
\label{11/03/04/2:13}
&\mathcal{K}(u_{n}^{*})\le 0
\quad 
\mbox{for any $n \in \mathbb{N}$},
\\[6pt]
\label{11/03/04/1:30}
&\lim_{n\to \infty}\mathcal{I}_{\omega}(u_{n}^{*})=\widetilde{m}_{\omega},
\\[6pt]
\label{11/03/04/1:40}
& \sup_{n\in \mathbb{N}}
\left\| u_{n}^{*} \right\|_{H^{1}} < \infty
\quad 
\mbox{for any $n \in \mathbb{N}$}.
\end{align}
Since $\{u_{n}^{*}\}$ is  radially symmetric and bounded in $H^{1}(\mathbb{R}^{d})$, there exists a radially symmetric function $Q \in H^{1}(\mathbb{R}^{d})$ such that 
\begin{align}
\label{11/02/25/7:24}
&\lim_{n\to \infty}u_{n}^{*}=Q 
\quad 
\mbox{weakly in $H^{1}(\mathbb{R}^{d})$},
\\[6pt]
\label{11/04/10/17:52}
&\lim_{n\to \infty}u_{n}^{*}=Q 
\quad 
\mbox{strongly in $L^{q}(\mathbb{R}^{d})$ for $2<q<2^{*}$},
\\[6pt]
\label{11/03/04/1:49}
&\lim_{n\to \infty}u_{n}^{*}(x)=Q(x) 
\quad 
\mbox{for almost all $x \in \mathbb{R}^{d}$}.
\end{align}
This function $Q$ is a candidate for a minimizer of the variational problem (\ref{11/02/25/6:54}). We shall show that it is actually a minimizer.
\par 
Let us begin with the non-triviality of $Q$. Suppose the contrary that $Q$ is trivial. Then, it follows from (\ref{11/03/04/2:13}) and (\ref{11/04/10/17:52}) that 
\begin{equation}\label{11/04/10/18:14}
0\ge \limsup_{n\to \infty}\mathcal{K}(u_{n}^{*})
=2\limsup_{n\to \infty}\left\{ 
\left\| \nabla u_{n}^{*} \right\|_{L^{2}}^{2}
-\left\| u^{*} \right\|_{L^{2^{*}}}^{2^{*}}
\right\},
\end{equation}
so that   
\begin{equation}\label{11/04/10/18:18}
\limsup_{n\to \infty}\left\| \nabla u_{n}^{*} \right\|_{L^{2}}^{2}
\le 
\liminf_{n\to \infty}\left\| u_{n}^{*} \right\|_{L^{2^{*}}}^{2^{*}}.
\end{equation}
Moreover, this together with the definition of $\sigma$ (see (\ref{11/04/09/17:01})) gives us that 
\begin{equation}\label{11/04/10/18:22}
\limsup_{n\to \infty}\left\| \nabla u_{n}^{*} \right\|_{L^{2}}^{2}
\ge 
\sigma 
\liminf_{n\to \infty}\left\| u_{n}^{*} \right\|_{L^{2^{*}}}^{2}
\ge 
\sigma \limsup_{n\to \infty}
\left\| \nabla u_{n}^{*} \right\|_{L^{2}}^{\frac{2(d-2)}{d}},
\end{equation} 
so that 
\begin{equation}\label{11/04/18/8:57}
\sigma^{\frac{d}{2}}
\le \limsup_{n\to \infty}\left\| \nabla u_{n}^{*} \right\|_{L^{2}}.
\end{equation}
Hence, if $Q$ is trivial, then we have that 
\begin{equation}\label{11/04/10/18:26}
\begin{split}
\widetilde{m}_{\omega}
&=
\lim_{n\to \infty}\mathcal{I}_{\omega}(u_{n}^{*})
\\[6pt]
&\ge 
\lim_{n\to \infty}
\left\{ 
\frac{d(p-1)-4}{d(p-1)}\left\| \nabla u_{n}^{*} \right\|_{L^{2}}^{2}+
\frac{4-(d-2)(p-1)}{d(p-1)}\left\| u_{n}^{*} \right\|_{L^{2^{*}}}^{2^{*}}
\right\}
\\[6pt]
&\ge \frac{2}{d}\liminf_{n\to \infty}\left\| \nabla u_{n}^{*} \right\|_{L^{2}}^{2}
\ge \frac{2}{d}\sigma^{\frac{d}{2}}.
\end{split}
\end{equation}
However, this contradicts the hypothesis (\ref{11/06/17/16:42}). Thus, $Q$ is non-trivial. 
\par 
We shall show that $\mathcal{K}(Q)=0$. By the Brezis-Lieb Lemma (see \cite{Brezis-Lieb}), we have
\begin{equation}\label{11/02/25/7:29}
\mathcal{I}_{\omega}(u_{n}^{*})
-
\mathcal{I}_{\omega}(u_{n}^{*}-Q)
-
\mathcal{I}(Q)
=o_{n}(1).
\end{equation} 
Since $\displaystyle{\lim_{n\to \infty}\mathcal{I}_{\omega}(u_{n}^{*})= \widetilde{m}_{\omega}}$ and $\mathcal{I}_{\omega}$ is positive, we see from (\ref{11/02/25/7:29}) 
that
\begin{equation}\label{11/04/10/21:34}
\mathcal{I}_{\omega}(Q)\le \widetilde{m}_{\omega}.
\end{equation} 
\indent 
First, we suppose that $\mathcal{K}(Q)<0$, which together with the definition of $\widetilde{m}_{\omega}$ (see (\ref{11/02/25/6:55})) and (\ref{11/04/10/21:34}) implies that $\mathcal{I}_{\omega}(Q)=\widetilde{m}_{\omega}$. Moreover, we can take $\lambda_{0}\in (0,1)$ such that $\mathcal{K}(T_{\lambda_{0}}Q)=0$. Then, we have  
\begin{equation}\label{11/04/10/21:37}
\widetilde{m}_{\omega}\le \mathcal{I}_{\omega}(T_{\lambda_{0}}Q)<\mathcal{I}_{\omega}(Q)=\widetilde{m}_{\omega}.
\end{equation} 
This is a contradiction. Therefore $\mathcal{K}(Q)\ge 0$. 
\par 
Second, suppose the contrary that $\mathcal{K}(Q)>0$. Then, it follows 
from  (\ref{11/03/04/2:13}) and 
\begin{equation}\label{11/02/25/7:37}
\mathcal{K}(u_{n}^{*})-\mathcal{K}(u_{n}^{*}-Q)-\mathcal{K}(Q)
=o_{n}(1)
\end{equation}
that $\mathcal{K}(u_{n}^{*}-Q)<0$ for any sufficiently large $n \in \mathbb{N}$, so that there exists a unique $\lambda_{n}\in (0,1)$ such that 
$\mathcal{K}(T_{\lambda_{n}}(u_{n}^{*}-Q))=0$. Then, we easily verify that  
\begin{equation}\label{11/02/25/7:42}
\begin{split}
\widetilde{m}_{\omega}
&\le 
\mathcal{I}_{\omega}(T_{\lambda_{n}}(u_{n}^{*}-Q))
\\[6pt]
&\le \mathcal{I}_{\omega}(u_{n}^{*}-Q)
=\mathcal{I}_{\omega}(u_{n}^{*})
-
\mathcal{I}_{\omega}(Q)+o_{n}(1)
\\[6pt]
&= \widetilde{m}_{\omega} -
\mathcal{I}_{\omega}(Q)+o_{n}(1).
\end{split}
\end{equation}
Hence, we conclude $\mathcal{I}_{\omega}(Q)=0$. However, this contradicts the fact that $Q$ is non-trivial. Thus, $\mathcal{K}(Q)=0$. 
\par 
Since $Q$ is non-trivial and $\mathcal{K}(Q)=0$, we have 
\begin{equation}\label{11/02/25/7:57}
m_{\omega}
\le \mathcal{S}_{\omega}(Q)=\mathcal{I}_{\omega}(Q). 
\end{equation}
Moreover, it follows from (\ref{11/02/25/7:29}) and Proposition \ref{11/04/09/15:32} that 
\begin{equation}\label{11/04/10/21:53}
\mathcal{I}_{\omega}(Q) \le \liminf_{n\to \infty}\mathcal{I}_{\omega}(u_{n}^{*})
\le \widetilde{m}_{\omega}=m_{\omega}.
\end{equation} 
Combining (\ref{11/02/25/7:57}) and (\ref{11/04/10/21:53}), we obtain that $\mathcal{S}_{\omega}(Q)=\mathcal{I}_{\omega}(Q)=m_{\omega}$. Thus, we have proved that $Q$ is a minimizer for the variational problem (\ref{11/02/25/6:54}). 
\end{proof}

We shall give the proof of Theorem \ref{11/02/25/6:59}. In view of Proposition 
\ref{11/06/17/16:40}, it suffices to show the following lemma. 
\begin{lemma}
\label{11/04/09/15:33}
%%%%%%
Assume $d\ge 4$. Then, for any $\omega>0$ and $\mu>0$, we have 
\begin{equation}\label{11/04/09/16:59}
m_{\omega}< \frac{2}{d}\sigma^{\frac{d}{2}}.
\end{equation} 
\end{lemma}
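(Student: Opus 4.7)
The plan is to follow the Br\'ezis--Nirenberg strategy: construct an explicit test family $\{u_\varepsilon\}_{\varepsilon>0}$ concentrating around an Aubin--Talenti extremizer of $\sigma$, and bound $\sup_{\lambda>0}\mathcal{S}_\omega(T_\lambda u_\varepsilon)$ strictly below $\tfrac{2}{d}\sigma^{d/2}$. By (\ref{11/04/09/16:06})--(\ref{11/06/17/9:20}) this supremum is attained at the unique $\lambda(u_\varepsilon)$ at which $\mathcal{K}(T_{\lambda(u_\varepsilon)}u_\varepsilon)=0$, so any such bound immediately yields $m_\omega\le\mathcal{S}_\omega(T_{\lambda(u_\varepsilon)}u_\varepsilon)<\tfrac{2}{d}\sigma^{d/2}$.

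For the construction, fix a positive radial Aubin--Talenti minimizer $W$ with $\|\nabla W\|_{L^2}^2=\sigma$ and $\|W\|_{L^{2^*}}^{2^*}=\sigma^{d/2}$, set $W_\varepsilon(x):=\varepsilon^{-(d-2)/2}W(x/\varepsilon)$, fix a smooth radial cutoff $\eta$ equal to $1$ on $B_1$ and compactly supported, and put $u_\varepsilon:=\eta W_\varepsilon$. I would then collect the standard asymptotics as $\varepsilon\to 0$:
\begin{align*}
\|\nabla u_\varepsilon\|_{L^2}^2 &= \sigma+O(\varepsilon^{d-2}),\qquad \|u_\varepsilon\|_{L^{2^*}}^{2^*}=\sigma^{d/2}+O(\varepsilon^{d}),\\
\|u_\varepsilon\|_{L^{p+1}}^{p+1} &= c_p\,\varepsilon^{\alpha}+o(\varepsilon^\alpha),\qquad \alpha:=d-\tfrac{(d-2)(p+1)}{2},\\
\|u_\varepsilon\|_{L^{2}}^{2} &= \begin{cases}O(\varepsilon^{2}) & (d\ge 5),\\ O(\varepsilon^{2}|\log\varepsilon|) & (d=4),\end{cases}
\end{align*}
with $c_p>0$; the integrability of $W^{p+1}$ needed for $c_p>0$ follows from $(d-2)(p+1)>d$, a consequence of $p>1+\tfrac{4}{d}$ together with $d\ge 3$.

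The key calculation is to expand
\[
\mathcal{S}_\omega(T_\lambda u_\varepsilon)=\omega\|u_\varepsilon\|_{L^2}^2+\|\nabla u_\varepsilon\|_{L^2}^2\lambda^{2}-\tfrac{2\mu}{p+1}\|u_\varepsilon\|_{L^{p+1}}^{p+1}\lambda^{d(p-1)/2}-\tfrac{d-2}{d}\|u_\varepsilon\|_{L^{2^*}}^{2^*}\lambda^{2^*},
\]
and to observe that the pure ``critical'' part $\|\nabla u_\varepsilon\|_{L^2}^2\lambda^2-\tfrac{d-2}{d}\|u_\varepsilon\|_{L^{2^*}}^{2^*}\lambda^{2^*}$ is maximized over $\lambda>0$ with value $\tfrac{2}{d}\|\nabla u_\varepsilon\|_{L^2}^{d}\|u_\varepsilon\|_{L^{2^*}}^{-(d-2)}=\tfrac{2}{d}\sigma^{d/2}+O(\varepsilon^{d-2})$, attained at some $\lambda_\star(\varepsilon)\to 1$. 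Inspection of the equation $\mathcal{K}(T_\lambda u_\varepsilon)=0$ shows that the true maximizer $\lambda_\varepsilon$ of $\mathcal{S}_\omega(T_\lambda u_\varepsilon)$ remains bounded away from $0$ (and from $\infty$) as $\varepsilon\to 0$, so combining these gives
\[
\sup_{\lambda>0}\mathcal{S}_\omega(T_\lambda u_\varepsilon)\le \tfrac{2}{d}\sigma^{d/2}+\omega\,\|u_\varepsilon\|_{L^2}^{2}-c\,\|u_\varepsilon\|_{L^{p+1}}^{p+1}+O(\varepsilon^{d-2})
\]
for some constant $c>0$ independent of $\varepsilon$.

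The conclusion then reduces to the estimate $\omega\|u_\varepsilon\|_{L^2}^2+O(\varepsilon^{d-2})=o(\|u_\varepsilon\|_{L^{p+1}}^{p+1})$ as $\varepsilon\to 0$, i.e.\ to the two exponent inequalities $\alpha<d-2$ and $\alpha<2$. Both reduce cleanly to $p>1+\tfrac{4}{d}$; in particular, at the borderline dimension $d=4$ the hypothesis forces $p>2$ and hence $\alpha<1$, which absorbs even the $|\log\varepsilon|$ factor in $\|u_\varepsilon\|_{L^2}^2$. The main (and really only) technical obstacle is this bookkeeping at $d=4$; it is also where the same counting breaks, since for $d=3$ one has $W\not\in L^2$ and $\|u_\varepsilon\|_{L^2}^2=O(\varepsilon)$, so the argument would demand $\alpha<1$, i.e.\ $p>3$---this is precisely the obstruction underlying Theorem \ref{11/04/22/10:26}.
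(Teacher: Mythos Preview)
Your argument is correct and follows the same Br\'ezis--Nirenberg template as the paper: both use the (truncated) Aubin--Talenti family and the same norm asymptotics, and both feed a test function $T_{\lambda_\varepsilon}u_\varepsilon$ with $\mathcal{K}(T_{\lambda_\varepsilon}u_\varepsilon)=0$ into the variational value. The one organizational difference is that the paper expands the constrained scale explicitly, obtaining $\lambda_\varepsilon=1-C_0\varepsilon^{\alpha}+O(\varepsilon^{d-2})$, and then evaluates $\mathcal{I}_\omega(T_{\lambda_\varepsilon}u_\varepsilon)$ directly; you instead bound the critical piece $\|\nabla u_\varepsilon\|_{L^2}^2\lambda^2-\tfrac{d-2}{d}\|u_\varepsilon\|_{L^{2^*}}^{2^*}\lambda^{2^*}$ by its maximum over all $\lambda$ and use only that $\lambda_\varepsilon$ stays bounded below, which is a slightly slicker way to reach the same inequality.

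Two cosmetic points, neither affecting the substance: your normalization of $W$ is inconsistent (an extremizer cannot satisfy both $\|\nabla W\|_{L^2}^2=\sigma$ and $\|W\|_{L^{2^*}}^{2^*}=\sigma^{d/2}$ unless $\sigma=1$; the paper takes both equal to $\sigma^{d/2}$, and the maximum of the critical part is $\tfrac{2}{d}\|\nabla u_\varepsilon\|_{L^2}^{d}\|u_\varepsilon\|_{L^{2^*}}^{-d}$ rather than $\|u_\varepsilon\|_{L^{2^*}}^{-(d-2)}$), and the exponent inequalities $\alpha<2$ and $\alpha<d-2$ actually reduce merely to $p>1$ for $d\ge 4$, not to the stronger hypothesis $p>1+\tfrac{4}{d}$.
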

\begin{proof}[Proof of Lemma \ref{11/04/09/15:33}]
%%%%%%
In view of Proposition \ref{11/04/09/15:32}, it suffices to show that 
\begin{equation}
\label{11/04/09/17:09}
\widetilde{m}_{\omega}<\frac{2}{d}\sigma^{\frac{d}{2}}. 
\end{equation}
Let $W$ be the Talenti function  
\begin{equation}\label{11/04/10/13:23}
W(x):=\left( \frac{\sqrt{d(d-2)}}{1+|x|^{2}}\right)^{\frac{d-2}{2}}.
\end{equation}
Put $W_{\varepsilon}(x):=\varepsilon^{-\frac{d-2}{2}}W(\varepsilon^{-1}x)$ for $\varepsilon>0$. It is known that  
\begin{align}
\label{11/06/17/9:09}
&-\Delta W -|W|^{\frac{4}{d-2}}W=0, 
\\[6pt]
\label{11/04/10/13:24}
&\sigma^{\frac{d}{2}}
=\left\| \nabla W_{\varepsilon} \right\|_{L^{2}}^{2}
=\left\| W_{\varepsilon} \right\|_{L^{2^{*}}}^{2^{*}}
\quad 
\mbox{for any $\varepsilon>0$}.
\end{align}
 
 We easily see that $W_{\varepsilon}$ belongs to $L^{2}(\mathbb{R}^{d})$ for $d\ge 5$. When $d=4$, $W_{\varepsilon}$  no longer belong to $L^{2}(\mathbb{R}^{4})$ and we need a cut-off function in our proof: 
Let $b \in C^{\infty}(\mathbb{R}^{4})$ be a function such that 
\begin{equation}\label{11/04/09/17:12}
b(x)=
\left\{ 
\begin{array}{ll}
1 & \mbox{if $|x|\le 1$},
\\[6pt]
0 & \mbox{if $|x|\ge 2$}.
\end{array} 
\right.
\end{equation}
We define the function $w_{\varepsilon}$ by 
\begin{equation}\label{11/04/09/17:17}
w_{\varepsilon}:=\left\{ \begin{array}{ll}
bW_{\varepsilon}&\mbox{when $d=4$},
\\[6pt]
W_{\varepsilon} & \mbox{when $d\ge 5$}.
\end{array}\right.
\end{equation}
Then, for any $\varepsilon <1$, we can verify (see p.35 of \cite{willem}) that 
\begin{align}
\label{11/04/09/17:22}
\left\| \nabla w_{\varepsilon}\right\|_{L^{2}}^{2}
&=
\sigma^{\frac{d}{2}}+O(\varepsilon^{d-2}),
\\[6pt]
\label{11/04/09/17:23}
\left\| w_{\varepsilon}\right\|_{L^{2^{*}}}^{2^{*}}
&= \sigma^{\frac{d}{2}}+O(\varepsilon^{d}),
\\[6pt]
\label{11/04/09/17:24}
\left\| w_{\varepsilon}\right\|_{L^{2}}^{2}
&= 
\left\{ \begin{array}{lcl}
C_{1}\varepsilon^{2}| \log{\varepsilon} |+O(\varepsilon^{2})
&\mbox{if}& d=4,
\\[6pt]
C_{1}\varepsilon^{2}+O(\varepsilon^{d-2})&\mbox{if}& d\ge 5,
\end{array}
\right.
\\[6pt]
\label{11/04/09/17:25}
\left\| w_{\varepsilon}\right\|_{L^{p+1}}^{p+1}
&= 
C_{2}\varepsilon^{d-\frac{(d-2)(p+1)}{2}}+O(\varepsilon^{\frac{(d-2)(p+1)}{2}}),
\end{align}
where $C_{1}$ and $C_{2}$ are positive constants independent of $\varepsilon$. Moreover, it follows from (\ref{11/04/09/16:06}) that; 
\begin{equation}\label{11/04/09/17:30}
\begin{split}
&\mbox{For any $\varepsilon\in (0,1)$, there exists a unique $\lambda_{\varepsilon}>0$ such that}
\\ 
&\qquad 
0=\mathcal{K}(T_{\lambda_{\varepsilon}}w_{\varepsilon})
=
2\lambda_{\varepsilon}
\left\| \nabla w_{\varepsilon} \right\|_{L^{2}}^{2}
-
\mu \frac{d(p-1)}{p+1}
\lambda_{\varepsilon}^{\frac{d(p-1)}{2}}
\left\| w_{\varepsilon} \right\|_{L^{p+1}}^{p+1}
-
2\lambda_{\varepsilon}^{2^{*}}
\left\| w_{\varepsilon} \right\|_{L^{2^{*}}}^{2^{*}}
. 
\end{split}
\end{equation}
Combining (\ref{11/04/09/17:22})--(\ref{11/04/09/17:25}) and (\ref{11/04/09/17:30}), we obtain that  
\begin{equation}\label{11/04/09/18:25}
\lambda_{\varepsilon}^{\frac{4}{d-2}}
=
1-C\lambda_{\varepsilon}^{\frac{d(p-1)-4}{2}}
\varepsilon^{d-\frac{(d-2)(p+1)}{2}}
+O(\varepsilon^{d-2}),
\end{equation}
where $C>0$ is some constant which is independent of $\varepsilon$. Since $\frac{4}{d-2}>\frac{d(p-1)}{2}-2$, we also find from (\ref{11/04/09/18:25}) that 
\begin{equation}\label{11/04/09/18:34}
\lim_{\varepsilon\downarrow 0}\lambda_{\varepsilon}=1.
\end{equation} 
Put $\lambda_{\varepsilon}=1+\alpha_{\varepsilon}$, where 
$\displaystyle{\lim_{\varepsilon\downarrow 1} \alpha_{\varepsilon} =0}$. Then, (\ref{11/04/09/18:25}) together with the expansion $(1+\alpha_{\varepsilon})^{q}=1+q\alpha_{\varepsilon} + o(\alpha_{\varepsilon})$ ($q>0$) gives us that 
\begin{equation}\label{11/04/09/18:47}
1+\frac{4}{d-2}\alpha_{\varepsilon}+o(\alpha_{\varepsilon})
=1
-C\left\{ 
1+{\frac{d(p-1)-4}{2}} \alpha_{\varepsilon}+o(\alpha_{\varepsilon})
\right\}\varepsilon^{d-\frac{(d-2)(p+1)}{2}}
+
O(\varepsilon^{d-2}),
\end{equation}
so that 
\begin{equation}\label{11/04/09/18:52}
\alpha_{\varepsilon}
= -C_{0}\varepsilon^{d-\frac{(d-2)(p+1)}{2}}+O(\varepsilon^{d-2})
\quad 
\mbox{for any sufficiently small $\varepsilon>0$},
\end{equation}
where $C_{0}>0$ is some constant which is independent of $\varepsilon$. When $d\ge 4$, $d-2>d-\frac{(d-2)(p+1)}{2}$ and hence we find from (\ref{11/04/09/18:52}) that
\begin{equation}\label{11/04/09/18:59}
\lambda_{\varepsilon}
= 1- C_{0}\varepsilon^{d-\frac{(d-2)(p+1)}{2}}+O(\varepsilon^{d-2})
\quad 
\mbox{for any sufficiently small $\varepsilon>0$}.
\end{equation}
Using (\ref{11/04/09/17:22})--(\ref{11/04/09/17:25}) and (\ref{11/04/09/18:59}), we obtain that 
\begin{equation}\label{11/04/09/17:03}
\begin{split}
\widetilde{m}_{\omega}
&\le \mathcal{I}_{\omega}(T_{\lambda_{\varepsilon}}w_{\varepsilon})\\[6pt]
&=
\omega \left\| w_{\varepsilon} \right\|_{L^{2}}^{2}
+\frac{d(p-1)-4}{d(p-1)}
\lambda_{\varepsilon}^{2}
\left\| \nabla w_{\varepsilon} \right\|_{L^{2}}^{2}
+
\frac{4-(d-2)(p-1)}{d(p-1)}
\lambda_{\varepsilon}^{2^{*}}
\left\| w_{\varepsilon} \right\|_{L^{2^{*}}}^{2^{*}}
\\[6pt]
&\le 
\omega C_{1}\varepsilon^{2}
+
\frac{d(p-1)-4}{d(p-1)}\lambda_{\varepsilon}^{2}
\sigma^{\frac{d}{2}}
+
\frac{4-(d-2)(p-1)}{d(p-1)}
\lambda_{\varepsilon}^{2^{*}}
\sigma^{\frac{d}{2}}+O(\varepsilon^{d-2})
\\[6pt]
&\le 
\omega C_{1}\varepsilon^{2}
+
\frac{d(p-1)-4}{d(p-1)}
\left\{ 1-2C_{0}\varepsilon^{d-\frac{(d-2)(p+1)}{2}} \right\}
\sigma^{\frac{d}{2}}
\\[6pt]
&\qquad 
+
\frac{4-(d-2)(p-1)}{d(p-1)}
\left\{ 1-2^{*}C_{0}\varepsilon^{d-\frac{(d-2)(p+1)}{2}} \right\}
\sigma^{\frac{d}{2}}
+o(\varepsilon^{d-\frac{(d-2)(p+1)}{2}})
\\[6pt]
&=
\frac{2}{d}\sigma^{\frac{d}{2}}
-\frac{16}{d(d-2)}C_{0}\varepsilon^{d-\frac{(d-2)(p+1)}{2}} 
+
\omega C_{1}\varepsilon^{2}
+o(\varepsilon^{d-\frac{(d-2)(p+1)}{2}})
\\[6pt]
&\hspace{180pt}
\qquad 
\mbox{for any sufficiently small $\varepsilon>0$}.
\end{split}
\end{equation}
Since $d-\frac{(d-2)(p+1)}{2}<2$ for $p>1$, we obtain $\widetilde{m}_{\omega}<\frac{2}{d}\sigma^{\frac{d}{2}}$.
\end{proof}

%%%%%%
\section{Non-existence result in 3D}
\label{11/04/21/1:27}
%%%%%%
In this section, we will give the proof of Theorem \ref{11/04/22/10:26}. Before proving the theorem, we prepare the following lemma.
\begin{lemma}\label{11/04/30/11:47}
Assume $d\ge 3$, $\omega>0$ and $\mu>0$. Let $u$ be a minimizer of the variational problem for $m_{\omega}$. Then, we have that 
\begin{equation}\label{11/04/30/11:48}
\left\| u \right\|_{H^{1}} \lesssim 1, 
\end{equation}
where the implicit constant is independent of $\mu$.
\end{lemma}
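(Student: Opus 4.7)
The plan is to combine the constraint $\mathcal{K}(u)=0$ with the identity $\mathcal{S}_{\omega}=\mathcal{I}_{\omega}+\frac{2}{d(p-1)}\mathcal{K}$ (a rearrangement of (\ref{11/02/25/6:45})) to rewrite $\mathcal{S}_{\omega}(u)=m_{\omega}$ as $\mathcal{I}_{\omega}(u)=m_{\omega}$. Because $1+\frac{4}{d}<p<2^{*}-1$, each of the three coefficients appearing in the explicit form of $\mathcal{I}_{\omega}$ is strictly positive and depends only on $d,p,\omega$, never on $\mu$. Discarding the nonnegative $L^{2^{*}}$-term then yields
\begin{equation*}
\omega\|u\|_{L^{2}}^{2}+\frac{d(p-1)-4}{d(p-1)}\|\nabla u\|_{L^{2}}^{2}\le m_{\omega},
\end{equation*}
so $\|u\|_{H^{1}}^{2}\lesssim m_{\omega}$ with implicit constant independent of $\mu$.

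It therefore remains to bound $m_{\omega}$ uniformly in $\mu>0$. I would fix, once and for all, a test function $v\in C_{c}^{\infty}(\mathbb{R}^{d})\setminus\{0\}$ independent of $\mu$, and use (\ref{11/04/09/16:06}) to pick $\lambda_{\mu}>0$ with $\mathcal{K}(T_{\lambda_{\mu}}v)=0$; this yields $m_{\omega}\le\mathcal{S}_{\omega}(T_{\lambda_{\mu}}v)=\mathcal{I}_{\omega}(T_{\lambda_{\mu}}v)$. The defining equation reads
\begin{equation*}
2\lambda_{\mu}^{2}\|\nabla v\|_{L^{2}}^{2}=\mu\frac{d(p-1)}{p+1}\lambda_{\mu}^{d(p-1)/2}\|v\|_{L^{p+1}}^{p+1}+2\lambda_{\mu}^{2^{*}}\|v\|_{L^{2^{*}}}^{2^{*}},
\end{equation*}
and dropping the nonnegative $\mu$-dependent term produces the $\mu$-uniform bound $\lambda_{\mu}\le\lambda_{0}:=(\|\nabla v\|_{L^{2}}^{2}/\|v\|_{L^{2^{*}}}^{2^{*}})^{1/(2^{*}-2)}$. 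Since $\mathcal{I}_{\omega}(T_{\lambda}v)$ is, as a function of $\lambda$, a sum of nonnegative terms each nondecreasing in $\lambda$ (recall that the $L^{2}$-norm is $T_{\lambda}$-invariant), I conclude $\mathcal{I}_{\omega}(T_{\lambda_{\mu}}v)\le\mathcal{I}_{\omega}(T_{\lambda_{0}}v)$, a constant depending only on $v,\omega,d,p$.

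The only mildly delicate point is obtaining the $\mu$-uniform upper bound on $\lambda_{\mu}$; this could appear subtle because $\lambda_{\mu}$ is defined implicitly, but the nonnegativity of the $\mu$-dependent term in $\mathcal{K}(T_{\lambda_{\mu}}v)=0$ makes the bound immediate by simply dropping that term. Chaining the two steps gives $\|u\|_{H^{1}}\lesssim 1$ uniformly in $\mu$, as required.
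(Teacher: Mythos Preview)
Your proof is correct. The overall strategy---show $\|u\|_{H^{1}}^{2}\lesssim m_{\omega}$ via $\mathcal{I}_{\omega}(u)=m_{\omega}$, then bound $m_{\omega}$ by a $\mu$-independent constant using a fixed test function---matches the paper's.

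The only difference is in how the test function is chosen. The paper works with the auxiliary problem $\widetilde{m}_{\omega}$ (constraint $\mathcal{K}\le 0$) rather than $m_{\omega}$: it takes a fixed $\chi\in C_{c}^{\infty}\setminus\{0\}$, rescales by a \emph{scalar} multiple to obtain $\widetilde{\chi}$ with $\|\nabla\widetilde{\chi}\|_{L^{2}}^{2}=\|\widetilde{\chi}\|_{L^{2^{*}}}^{2^{*}}$, so that $\mathcal{K}(\widetilde{\chi})=-\mu\frac{d(p-1)}{p+1}\|\widetilde{\chi}\|_{L^{p+1}}^{p+1}<0$ for every $\mu>0$. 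Since $\widetilde{\chi}$ itself is $\mu$-independent, one immediately gets $m_{\omega}=\widetilde{m}_{\omega}\le\mathcal{I}_{\omega}(\widetilde{\chi})$ with no further work. Your route through the $L^{2}$-scaling $T_{\lambda_{\mu}}$ hits the equality constraint $\mathcal{K}=0$ instead, which forces a $\mu$-dependent parameter $\lambda_{\mu}$ and then a separate (easy) argument to bound it uniformly. Both arguments are equally valid; the paper's buys a one-line shortcut by exploiting the relaxed constraint in $\widetilde{m}_{\omega}$, while yours has the minor advantage of never invoking Proposition~\ref{11/04/09/15:32}.
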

\begin{proof}[Proof of Lemma \ref{11/04/30/11:47}]
Fix a non-trivial function $\chi \in C_{c}^{\infty}(\mathbb{R}^{d})$ and put 
\begin{equation}\label{11/04/30/11:51}
\widetilde{\chi}:=
\left( 
\frac{ \left\| \nabla \chi \right\|_{L^{2}}^{2}}{\left\| \chi \right\|_{L^{2^{*}}}^{2^{*}}}
\right)^{\frac{1}{2^{*}-2}} \chi.
\end{equation}
It is easy to see that 
\begin{equation}\label{11/04/30/11:57}
\left\| \nabla \widetilde{\chi} \right\|_{L^{2}}^{2}
=
\left( \frac{ \left\| \nabla \chi \right\|_{L^{2}}^{2}}{\left\| \chi \right\|_{L^{2^{*}}}^{2^{*}}} \right)^{\frac{2}{2^{*}-2}}\left\|\nabla \chi \right\|_{L^{2}}^{2}
=
\left( 
\frac{\left\| \nabla \chi \right\|_{L^{2}}^{2}}
{\left\| \chi \right\|_{L^{2^{*}}}^{2^{*}}}
\right)^{\frac{2^{*}}{2^{*}-2}}
\left\| \chi \right\|_{L^{2^{*}}}^{2^{*}}
=
\left\| \widetilde{\chi} \right\|_{L^{2^{*}}}^{2^{*}},
\end{equation}
so that $\mathcal{K}(\widetilde{\chi})<0$. Hence, we obtain that 
\begin{equation}\label{11/04/30/12:17}
\widetilde{m}_{\omega}
\le 
\mathcal{I}_{\omega}(\widetilde{\chi}).
\end{equation}
Note here that $\mathcal{I}_{\omega}(\widetilde{\chi})$ is independent of $\mu$. 
\par 
Now, we take any minimizer $u$ for $m_{\omega}$. Then, Proposition \ref{11/04/09/15:32} shows that $u$ is also a minimizer for $\widetilde{m}_{\omega}$. Hence, we see from  (\ref{11/04/30/12:17}) that  
\begin{equation}\label{11/04/30/12:10}
\left\| u \right\|_{H^{1}}^{2}
\lesssim 
\mathcal{I}_{\omega}(u)=\widetilde{m}_{\omega}
\le \mathcal{I}_{\omega}(\widetilde{\chi}), 
\end{equation}
where the implicit constant is independent of $\mu$. Thus, we have completed the proof. 
\end{proof}

Now, we are in a position to prove Theorem \ref{11/04/22/10:26}. 
\begin{proof}[Proof of Theorem \ref{11/04/22/10:26}] 
We first prove the claim (i). Suppose the contrary that there exists a minimizer $u$ of the variational problem for $m_{\omega}$. It follows from Proposition \ref{11/04/09/15:32} that $u$ is also a minimizer for $\widetilde{m}_{\omega}$. Let $u^{*}$ be the Schwarz symmetrization of $u$. Then, we easily see that $u^{*}$ is a minimizer for $\widetilde{m}_{\omega}$ as well as $u$. Hence, using Proposition \ref{11/04/09/15:32} again, we find that $u^{*}$ is a radially symmetric solution to the elliptic equation (\ref{11/02/25/6:42}). Moreover, Lemma 1 in \cite{Berestycki-Lions} gives us that $u^{*} \in C^{2}(\mathbb{R}^{d})$. 
\par 
Put $\phi(r):=ru^{*}(r)$ ($r=|x|$). We see that $\phi$ satisfies the equation 
\begin{equation}\label{11/06/17/9:57}
\phi''-\omega \phi+\mu r^{1-p}\phi^{p}+r^{-4}\phi^{5}=0, 
\qquad 
r>0.
\end{equation}
Let $g \in W^{3,\infty}([0,\infty))$. Multiplying $g\phi'$ by the equation (\ref{11/06/17/9:57}) and integrating the resulting equation, we obtain  
\begin{equation}
\begin{split} \label{eqa-3}
& \quad
- \frac{g(0)}{2} (\phi'(0))^{2} 
- \frac{1}{2} \int_{0}^{\infty} g'(\phi')^{2}
+ \frac{1}{2} \int_{0}^{\infty} g' \phi^{2}   
\\[6pt]
& + \mu \int_{0}^{\infty} \left(\frac{p-1}{p+1}r^{-p}g -
\frac{1}{p+1}r^{1-p}g'\right)|\phi|^{p+1} +
\int_{0}^{\infty}
\left(\frac{2}{3}r^{-5}g-\frac{r^{-4}}{6}g'\right)|\phi|^{6} = 0.
\end{split}
\end{equation}
On the other hand, multiplying the equation (\ref{11/06/17/9:57}) by $g'\phi/2$ and integrating the resulting equation, we obtain
\begin{equation} \label{eqa-4}
- \frac{1}{2} \int_{0}^{\infty} g'(\phi')^{2} 
+ \frac{1}{4} \int_{0}^{\infty} g'''\phi^{2}
- \frac{1}{2} \int_{0}^{\infty} g' \phi^{2}
+ \frac{\mu}{2} \int_{0}^{\infty} r^{1-p}g' |\phi|^{p+1} 
+
\frac{1}{2} \int_{0}^{\infty} r^{-4}|\phi|^{6} g'  = 0.
\end{equation}
Subtracting \eqref{eqa-4} from \eqref{eqa-3},
we have the identity 
\begin{equation}\label{11/04/21/1:30}
\begin{split}
&- \frac{g(0)}{2} (\phi'(0))^{2}+\frac{1}{4}\int_{0}^{\infty}(g'''-4g')\phi^{2}
\\[6pt]
&= 
\mu 
\int_{0}^{\infty}
\left\{ \frac{p-1}{p+1}g - \frac{p+3}{2(p+1)}rg' \right\}
r^{-p}|\phi|^{p+1}
+\frac{2}{3}
\int_{0}^{\infty}\left\{ g-rg' \right\}r^{-5}
|\phi|^{6}.
\end{split}
\end{equation}
We take $g$ such that 
\begin{equation}\label{11/04/21/1:37}
\left\{ 
\begin{array}{rcl}
g'''(r)-4g'(r)&=&0, \quad r>0, 
\\[6pt]
g(0)&=&0.
\end{array}
\right.
\end{equation}
It is easy to verify that 
\begin{equation}\label{11/05/09/9:15}
g(r):=\frac{1-e^{-2r}}{2}
\ 
\mbox{is a solution to (\ref{11/04/21/1:37}).}
\end{equation}
Besides, we see from an elementary calculus that 
\begin{align}
\label{11/04/21/1:42}
g(r)-rg'(r)&=\frac{1-e^{-2r}}{2}-re^{-2r}\ge 0
\quad 
\mbox{for any $r>0$},
\\[6pt]
\label{11/04/30/17:12}
(g(r)-rg'(r))'&=2re^{-2r}>0
\quad 
\mbox{for any $r>0$}.
\end{align}
Taking this function, we see from the formula (\ref{11/04/21/1:30}) that 
\begin{equation}\label{11/04/26/21:47}
0=
\mu 
\int_{0}^{\infty}
\left\{ \frac{p-1}{p+1}g - \frac{p+3}{2(p+1)}rg' \right\}
r^{-p}|\phi|^{p+1}
+
\frac{2}{3}
\int_{0}^{\infty}\left\{ g-rg' \right\}r^{-5}
|\phi|^{6}.
\end{equation}
We shall show that 
\begin{align}
\label{11/04/27/13:52}
&\left| 
\int_{0}^{\infty}
\left\{ \frac{p-1}{p+1}g - \frac{p+3}{2(p+1)}rg' \right\}
r^{-p}|\phi|^{p+1}
\right|
\lesssim 1,
\\[6pt]
\label{11/04/30/17:00}
&\liminf_{\mu\to 0}\int_{0}^{\infty}
\left\{ g -   rg' \right\}
r^{-5}|\phi|^{6}
\gtrsim 1, 
\end{align}
where the implicit constants are independent of $\mu$. Once we obtain these estimates, taking $\mu\to 0$ in (\ref{11/04/21/1:30}), we derive a contradiction. Thus, the claim (i) holds.
\par 
We first prove (\ref{11/04/27/13:52}). It is easy to verify that the right-hand side of (\ref{11/04/27/13:52}) is bounded by 
\begin{equation}\label{11/04/27/14:36}
\begin{split}
\int_{0}^{\infty}(1-e^{-2r})r|u^{*}(r)|^{p+1}\,dr
+
\int_{0}^{\infty}e^{-2r}r^{2}|u^{*}(r)|^{p+1}\,dr
&\lesssim
\int_{0}^{\infty}r^{2}|u^{*}(r)|^{p+1}\,dr
\\[6pt]
&
\sim 
\left\| u \right\|_{L^{p+1}(\mathbb{R}^{3})}^{p+1}
\lesssim 
\left\| u \right\|_{H^{1}}^{p+1}
,
\end{split}
\end{equation}
where the implicit constants are independent of $\mu$. Combining  (\ref{11/04/27/14:36}) with Lemma \ref{11/04/30/11:47},  we obtain the estimate (\ref{11/04/27/13:52}). Next, we prove (\ref{11/04/30/17:00}). It follows from (\ref{11/04/21/1:42}), (\ref{11/04/30/17:12}) and the mean value theorem that 
\begin{equation}\label{11/04/30/17:13}
\int_{0}^{\infty}
\left\{ g -   rg' \right\}
r^{-5}|\phi|^{6}
=
\int_{0}^{\infty}
\left\{ g -   rg' \right\}
r|u^{*}|^{6}
\gtrsim  
\int_{0}^{\infty}r^{2}|u^{*}|^{6}
\sim \left\| u \right\|_{L^{6}}^{6},
\end{equation}
where the implicit constants are independent of $\mu$. Here, using the Sobolev embedding, $\mathcal{K}(u)=0$, the H\"older inequality and Lemma \ref{11/04/30/11:47}, we obtain that 
\begin{equation}\label{11/04/30/17:35}
\left\| u \right\|_{L^{6}}^{2} 
\lesssim 
\left\| \nabla u \right\|_{L^{2}}^{2}
\lesssim 
\left\| u \right\|_{L^{6}}^{\frac{3(p-1)}{2}}
+
\left\| u \right\|_{L^{6}}^{6} 
\quad 
\mbox{for all $0<\mu \le 1$}.
\end{equation}
Hence, we find that 
\begin{equation}\label{11/04/30/17:42}
1 \lesssim \left\| u \right\|_{L^{6}}^{6}
\quad \mbox{for all $0<\mu \le 1$}.
\end{equation}
The estimate (\ref{11/04/30/17:13}) together with (\ref{11/04/30/17:42}) gives (\ref{11/04/30/17:00}). 
\par 
We shall prove the claim (ii). Let $Q$ be a ground state of the equation (\ref{11/02/25/6:42}). We see from (\ref{11/06/14/11:45}) and Proposition \ref{11/06/15/16:19} that $Q$ is also a minimizer for $m_{\omega}$. However, this contradicts the claim (i). Thus, (ii) follows. 
\end{proof}

%%%%%%
\section{Blowup result} 
\label{11/06/14/10:06}
%%%%%%
In this section, we prove Theorem \ref{11/06/14/9:50}. The key lemma is the following. 
\begin{lemma}\label{10/12/23/11:48}
Assume  $d\ge 4$, $\omega>0$ and $\mu>0$. Let $\psi$ be a solution to (\ref{11/03/03/7:11}) starting from  $A_{\omega,-}$, and let $I_{\max}$ be the maximal interval where $\psi$ exists. Then, we have 
\begin{align}
\label{11/02/23/0:30}
& \psi(t) \in A_{\omega,-} 
\qquad \mbox{for any $t \in I_{\max}$}, 
\\[6pt]
\label{10/09/11/16:29}
&\sup_{t \in I_{\max}}\mathcal{K}(\psi(t))<0
.
\end{align}
\end{lemma}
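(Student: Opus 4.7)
The plan is to handle invariance first and then derive the quantitative bound via a scaling argument.

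For the invariance $\psi(t)\in A_{\omega,-}$, I would use the conservation laws. Mass conservation (\ref{11/06/14/11:02}) together with energy conservation (\ref{11/06/14/11:03}) implies that $\mathcal{S}_{\omega}(\psi(t))=\omega\|\psi_{0}\|_{L^{2}}^{2}+\mathcal{H}(\psi_{0})=\mathcal{S}_{\omega}(\psi_{0})<m_{\omega}$ for every $t\in I_{\max}$. So only the sign condition on $\mathcal{K}$ requires work. Since $\psi\in C(I_{\max},H^{1})$, the map $t\mapsto\mathcal{K}(\psi(t))$ is continuous, so if the sign changed there would exist $t_{0}\in I_{\max}$ with $\mathcal{K}(\psi(t_{0}))=0$. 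From $\mathcal{K}(\psi_{0})<0$ we have $\psi_{0}\neq 0$, hence $\|\psi(t_{0})\|_{L^{2}}=\|\psi_{0}\|_{L^{2}}>0$, so $\psi(t_{0})$ is admissible in (\ref{11/02/25/6:54}). This forces $\mathcal{S}_{\omega}(\psi(t_{0}))\ge m_{\omega}$, contradicting $\mathcal{S}_{\omega}(\psi(t_{0}))<m_{\omega}$. This proves (\ref{11/02/23/0:30}).

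For the uniform upper bound (\ref{10/09/11/16:29}), the plan is to exploit the auxiliary functional $\mathcal{I}_{\omega}$. Since $\psi(t)\in A_{\omega,-}$, by (\ref{11/04/09/16:06}) there is a unique $\lambda_{t}:=\lambda(\psi(t))\in(0,1)$ with $\mathcal{K}(T_{\lambda_{t}}\psi(t))=0$. Because $T_{\lambda}$ preserves the $L^{2}$-norm and the coefficients $\frac{d(p-1)-4}{d(p-1)}$ and $\frac{4-(d-2)(p-1)}{d(p-1)}$ in the formula (\ref{11/02/25/6:45}) for $\mathcal{I}_{\omega}$ are both strictly positive under the assumptions $1+\tfrac{4}{d}<p<2^{\ast}-1$, scaling by $\lambda_{t}<1$ strictly decreases $\mathcal{I}_{\omega}$:
\begin{equation*}
m_{\omega}=\widetilde{m}_{\omega}\le \mathcal{I}_{\omega}(T_{\lambda_{t}}\psi(t))<\mathcal{I}_{\omega}(\psi(t))=\mathcal{S}_{\omega}(\psi(t))-\frac{2}{d(p-1)}\mathcal{K}(\psi(t)),
\end{equation*}
where the first equality uses Proposition \ref{11/04/09/15:32}. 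Rearranging and using conservation of $\mathcal{S}_{\omega}$ along the flow, I obtain
\begin{equation*}
\mathcal{K}(\psi(t))\le -\frac{d(p-1)}{2}\bigl(m_{\omega}-\mathcal{S}_{\omega}(\psi_{0})\bigr)
\end{equation*}
uniformly in $t\in I_{\max}$. Since $\mathcal{S}_{\omega}(\psi_{0})<m_{\omega}$, this right-hand side is a strictly negative constant, which yields (\ref{10/09/11/16:29}).

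I do not anticipate a serious obstacle here. The only subtle point is justifying strict inequality $\mathcal{I}_{\omega}(T_{\lambda_{t}}\psi(t))<\mathcal{I}_{\omega}(\psi(t))$, which requires $\psi(t)$ to be non-trivial in the sense that $\|\nabla\psi(t)\|_{L^{2}}^{2}+\|\psi(t)\|_{L^{2^{\ast}}}^{2^{\ast}}>0$; this is guaranteed because $\mathcal{K}(\psi(t))<0$ rules out $\psi(t)\equiv 0$. No nontrivial dispersive estimate is needed: the entire argument reduces to conservation laws, the variational characterization of $m_{\omega}$, and the monotone scaling structure of $\mathcal{I}_{\omega}$ (alternatively, one could use the concavity in (\ref{11/06/17/9:21}) in place of the $\mathcal{I}_{\omega}$-monotonicity to reach the same conclusion).
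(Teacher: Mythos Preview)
Your proof is correct. The invariance argument matches the paper exactly. For the uniform bound (\ref{10/09/11/16:29}) you use the monotonicity of $\lambda\mapsto\mathcal{I}_{\omega}(T_{\lambda}u)$ (valid because both coefficients in (\ref{11/02/25/6:45}) are strictly positive for $1+\tfrac{4}{d}<p<2^{*}-1$), whereas the paper uses the concavity route (\ref{11/06/17/9:21}) that you mention as an alternative: from $\mathcal{S}_{\omega}(\psi(t))>\mathcal{S}_{\omega}(T_{\lambda(t)}\psi(t))+(1-\lambda(t))\mathcal{K}(\psi(t))\ge m_{\omega}+\mathcal{K}(\psi(t))$ it obtains $\mathcal{K}(\psi(t))<\mathcal{S}_{\omega}(\psi_{0})-m_{\omega}$. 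Both arguments hinge on the same scaling $T_{\lambda_{t}}$ onto the Nehari-type manifold $\{\mathcal{K}=0\}$; your version is slightly more elementary (no second-order information needed) and yields the sharper constant $\tfrac{d(p-1)}{2}(\mathcal{S}_{\omega}(\psi_{0})-m_{\omega})$, while the paper's concavity argument is the one that generalizes more readily when the auxiliary functional lacks a clean monotone form.
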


\begin{proof}[Proof of Lemma \ref{10/12/23/11:48}]
Since the action $\mathcal{S}_{\omega}(\psi(t))$ is conserved with respect to $t$, we have $\mathcal{S}_{\omega}(\psi(t))<m_{\omega}$ for any $t \in I_{\max}$. Thus, it suffices for 
 (\ref{11/02/23/0:30}) to show that 
\begin{equation}\label{11/06/14/14:08}
\mathcal{K}(\psi(t))<0 
\qquad 
\mbox{for any $t \in I_{\max}$}. 
\end{equation}
Suppose the contrary that $\mathcal{K}(\psi(t))>0$ for some $t \in I_{\max}$. Then, it follows from the continuity of $\psi(t)$ in $H^{1}(\mathbb{R}^{d})$ that there exists $t_{0}\in I_{\max}$ such that $\mathcal{K}(\psi(t_{0}))=0$. Then, we see from the definition of $m_{\omega}$ that $\mathcal{S}_{\omega}(\psi(t_{0}))\ge m_{\omega}$, which is a contradiction. Thus, (\ref{11/06/14/14:08}) holds.
\par  
Next, we shall prove (\ref{10/09/11/16:29}). Since $\mathcal{K}(\psi(t))<0$ for any $t \in I_{\max}$, we see from (\ref{11/04/09/16:06}); 
\begin{equation}\label{11/06/17/10:09}
\mbox{For any $t \in I_{\max}$, there exists $0<\lambda(t)<1$ such that $\mathcal{K}(T_{\lambda(t)}\psi(t))=0$}.
\end{equation}
This together with the definition of $m_{\omega}$ shows that 
\begin{equation}\label{11/06/17/10:24}
\mathcal{S}_{\omega}(T_{\lambda(t)}\psi(t)) \ge m_{\omega}.
\end{equation}
Moreover, it follows from the concavity (\ref{11/06/17/9:21}) together with (\ref{11/04/09/16:07}), (\ref{11/06/14/14:08}) and (\ref{11/06/17/10:24}) that 
\begin{equation}\label{11/06/17/10:12}
\begin{split}
\mathcal{S}_{\omega}(\psi(t))
&>
\mathcal{S}_{\omega}(T_{\lambda(t)}\psi(t))
+
(1-\lambda(t))\frac{d}{d \lambda}
\mathcal{S}_{\omega}(T_{\lambda}\psi(t))\bigg|_{\lambda=1}
\\[6pt]
&=
\mathcal{S}_{\omega}(T_{\lambda(t)}\psi(t))
+
(1-\lambda(t))\mathcal{K}(\psi(t))
\\[6pt]
&> m_{\omega}+\mathcal{K}(\psi(t)).
\end{split}
\end{equation}
Fix $t_{0}\in I_{\max}$. Then, we have from 
(\ref{11/06/17/10:12}) and the conservation law of 
$\mathcal{S}_{\omega}$ that  
\begin{equation}\label{10/12/23/11:52}
\mathcal{K}(\psi(t))
< -m_{\omega}+\mathcal{S}_{\omega}(\psi(t))
=
-m_{\omega}+\mathcal{S}_{\omega}(\psi(t_{0})).
\end{equation}
Hence, (\ref{10/09/11/16:29}) follows from the hypothesis $\mathcal{S}_{\omega}(\psi(t_{0}))<m_{\omega}$. 
\end{proof}

Let us move to the proof of Theorem \ref{11/06/14/9:50}. 
\begin{proof}[Proof of Theorem \ref{11/06/14/9:50}]
The proof is based on the idea of \cite{Nawa8, Ogawa-Tsutsumi}. We introduce the generalized version of virial identity; Let $\rho$ be a smooth function on $\mathbb{R}$ such that 
\begin{align}
\label{10/09/20/17:34}
&\rho(x)=\rho(4-x) 
\qquad \mbox{for all $x \in \mathbb{R}$}, 
\\[6pt]
\label{10/09/20/17:35}
&\rho(x)\ge 0 
\qquad \mbox{for all $x \in \mathbb{R}$},
\\[6pt]
\label{10/09/20/17:36}
&\int_{\mathbb{R}}\rho(x)\,dx =1,
\\[6pt]
\label{10/09/20/17:37}
&{\rm supp}\,\rho \subset (1,3),
\\[6pt]
\label{10/09/20/17:38}
&\rho'(x) \ge 0 
\qquad 
\mbox{for all $x <2$}. 
\end{align}
Put 
\begin{align}
\label{10/09/20/17:41}
w(r)&:=r-\int_{0}^{r}(r-s)\rho(s)\,ds \qquad \mbox{for $r\ge 0$},
\\[6pt]
\label{10/09/20/17:43}
W_{R}(x)&:=R^{2}w\left( \frac{|x|^{2}}{R^{2}}\right)
\qquad
\mbox{for $x \in \mathbb{R}^{d}$ and $R>0$}.
\end{align}
Then, for any solution $\psi \in C(I_{\max},H^{1}(\mathbb{R}^{d}))$ to (\ref{11/03/03/7:11}) and $t_{0} \in I_{\max}$, putting $\psi_{0}:=\psi(t_{0})$, we have the identity 
\begin{equation}\label{08/03/29/19:05}
\begin{split}
&\int_{\mathbb{R}^{d}}W_{R}|\psi(t)|^{2}
\\[6pt]
&=
\int_{\mathbb{R}^{d}}W_{R}|\psi_{0}|^{2}
+
(t-t_{0})\Im \int_{\mathbb{R}^{d}}\nabla W_{R}\cdot \nabla \psi_{0}
\, \overline{\psi_{0}} 
+\int_{t_{0}}^{t}\int_{t_{0}}^{t'}\mathcal{K}(\psi(t''))\,dt''dt'
\\[6pt]
&\quad 
- \int_{t_{0}}^{t}\int_{t_{0}}^{t'}
\int_{\mathbb{R}^{d}}
\left\{ 
2\int_{0}^{\frac{|x|^{2}}{R^{2}}}\!\!\! \rho(r)\,dr |\nabla \psi(t'')|^{2}
+\frac{4|x|^{2}}{R^{2}}\rho \biggm(\frac{|x|^{2}}{R^{2}}\biggm)
\left| \frac{x}{|x|}\cdot\nabla \psi(t'')\right|^{2} 
\right\}
dt''dt'
\\[6pt]
&\quad 
+\int_{t_{0}}^{t}\int_{t_{0}}^{t'}
\!
 \int_{\mathbb{R}^{d}}
\!
\left\{ d \! \int_{0}^{\frac{|x|^{2}}{R^{2}}}\!\!\! \rho(r)\,dr 
+ 
\frac{2|x|^{2}}{R^{2}} \rho \bigg( \frac{|x|^{2}}{R^{2}}\bigg) 
\right\} 
\left\{ 
\mu
\frac{p-1}{p+1}\left| \psi(t'') \right|^{p+1}
\!+
\frac{2}{d}\left| \psi(t'') \right|^{2^{*}}
 \right\}
dt''dt'
\\[6pt]
&\quad 
-\frac{1}{4}
\int_{t_{0}}^{t}\int_{t_{0}}^{t'}
\int_{\mathbb{R}^{d}}
\Delta^{2}W_{R}\,|\psi(t'')|^{2}\,dt''dt'
\qquad \qquad 
\mbox{for any $t \in I_{\max}$ and $R>0$}.
\end{split}
\end{equation}
\noindent 
We easily verify that  
\begin{align}
\label{11/06/18/22:23}
\left|\nabla W_{R}(x) \right|^{2}
&\le 4  W_{R}(x) 
\qquad 
\mbox{for any $R>0$ and $x \in \mathbb{R}^{d}$}
\\[6pt]
\label{10/12/31/13:24}
\left\| W_{R} \right\|_{L^{\infty}}
&\lesssim 
R^{2},
\\[6pt]
\label{10/12/31/13:23}
\left\| \nabla W_{R} \right\|_{L^{\infty}}
&\lesssim R,
\\[6pt]
\label{10/12/27/18:17}
\left\| \Delta^{2}W_{R} \right\|_{L^{\infty}}
&\lesssim 
\frac{1}{R^{2}}\left\| \rho \right\|_{W^{2,\infty}(\mathbb{R})}.
\end{align}

Now, let $\psi$ be a solution to (\ref{11/03/03/7:11}) starting from $A_{\omega,-}$. We see from Lemma \ref{10/12/23/11:48} that 
\begin{equation}
\label{11/06/22/15:19}
\varepsilon_{0}:=-\sup_{t\in I_{\max}}\mathcal{K}(\psi(t))>0.
\end{equation}
Our aim is to show that the maximal existence interval $I_{\max}$ is bounded. We divide the proof into three steps.
\\[6pt]
{\bf Step 1}.  We claim that;  
\begin{equation} \label{11/06/18/17:12}
\begin{split}
&\mbox{There exists $m_{*}>0$ and $R_{*}>0$ such that for any $R\ge R_{*}$, }
\\[6pt]
&m_{*} < 
\inf
\left\{
\int_{|x| \ge R} |v(x)|^{2}\,dx 
\Biggm|  
\begin{array}{l}
v \in H_{rad}^{1}(\mathbb{R}^{d}), \quad  
\mathcal{K}^{R}(v) \le - \displaystyle{\frac{\epsilon_{0}}{4}}, 
\\[6pt]
\|v\|_{L^{2}} \leq \|\psi_{0}\|_{L^{2}}
\end{array}
\right\},
\end{split}
\end{equation}
where $H_{rad}^{1}(\mathbb{R}^{d})$ is the set of radially symmetric functions in $H^{1}(\mathbb{R}^{d})$, and 
\begin{equation}\label{11/06/18/14:59}
\begin{split}
\mathcal{K}^{R}(v)
&:= 
\int_{\mathbb{R}^{d}}
\left\{ 
2\int_{0}^{\frac{|x|^{2}}{R^{2}}}\!\!\! 
\rho(r)\,dr
+\frac{4|x|^{2}}{R^{2}}\rho \biggm(\frac{|x|^{2}}{R^{2}}\biggm)
\right\}
|\nabla v|^{2}
\\[6pt]
&\quad 
- \int_{\mathbb{R}^{d}}
\!
\left\{ d \! \int_{0}^{\frac{|x|^{2}}{R^{2}}}\!\!\! \rho(r)\,dr 
+ 
\frac{2|x|^{2}}{R^{2}} \rho \bigg( \frac{|x|^{2}}{R^{2}}\bigg) 
\right\} 
\left\{ 
\mu
\frac{p-1}{p+1}\left| v \right|^{p+1}
\!+
\frac{2}{d}\left| v \right|^{2^{*}}
 \right\}.
\end{split}
\end{equation}
Let $R>0$ be a sufficiently large constant to be chosen later, and let $v$ be a function such that  
\begin{align}
\label{11/06/19/11:33}
&v \in H_{rad}^{1}(\mathbb{R}^{d}),
\\[6pt]
\label{11/06/18/17:04}
&\mathcal{K}^{R}(v) \leq - \frac{\epsilon_{0}}{4}, 
\\[6pt] 
\label{11/06/18/17:06}
&\|v\|_{L^{2}} \leq \|\psi_{0}\|_{L^{2}}. 
\end{align}
Then, we see from \eqref{11/06/18/17:04} that 
\begin{equation}\label{11/06/18/17:09}
\begin{split}
&\frac{\epsilon_{0}}{4} 
+ 
\int_{\mathbb{R}^{d}}
\left\{
2\int_{0}^{\frac{|x|^{2}}{R^{2}}}
\rho(r)\,dr
+\frac{4|x|^{2}}{R^{2}}\rho \biggm(\frac{|x|^{2}}{R^{2}}\biggm)
\right\}
|\nabla v|^{2}
\\[6pt]
& 
\le - \mathcal{K}^{R}(v) 
+
\int_{\mathbb{R}^{d}}
\left\{
2\int_{0}^{\frac{|x|^{2}}{R^{2}}}
\rho(r)\,dr
+\frac{4|x|^{2}}{R^{2}}\rho \biggm(\frac{|x|^{2}}{R^{2}}\biggm)
\right\}|\nabla v|^{2}
\\[6pt]
& 
=
\int_{\mathbb{R}^{d}}
\!
\left\{ d \! \int_{0}^{\frac{|x|^{2}}{R^{2}}}\!\!\! \rho(r)\,dr 
+ 
\frac{2|x|^{2}}{R^{2}} \rho \bigg( \frac{|x|^{2}}{R^{2}}\bigg) 
\right\} 
\left\{ 
\mu
\frac{p-1}{p+1}\left| v \right|^{p+1}
\!+
\frac{2}{d}\left| v \right|^{2^{*}}
 \right\}.
\end{split}   
\end{equation}
To estimate the right-hand side of \eqref{11/06/18/17:09}, we employ the following inequality (see Lemma 6.5.11 in \cite{Cazenave}); Assume that $d\ge 1$. Let $\kappa$ be a non-negative and radially symmetric function in $C^{1}(\mathbb{R}^{d})$ with $|x|^{-(d-1)}\max\{-\frac{x\cdot\nabla \kappa }{|x|},\, 0\} \in L^{\infty}(\mathbb{R}^{d})$. Then, we have 
\begin{equation}\label{08/06/12/7:28}
\begin{split}
&
\|\kappa^{\frac{1}{2}} u\|_{L^{\infty}}
\\[6pt]
&\lesssim   
\|u\|_{L^{2}}^{\frac{1}{2}}
\left\{ 
\left\| |x|^{-(d-1)}\kappa \, \frac{x \cdot \nabla u}{|x|} \right\|_{L^{2}}^{\frac{1}{2}}
+
\left\| |x|^{-(d-1)}
\max \left\{-\frac{x \cdot \nabla \kappa}{|x|}, \ 0 \right\}
\right\|_{L^{\infty}}^{\frac{1}{2}}
\left\| u \right\|_{L^{2}}^{\frac{1}{2}}
\right\}
\\[6pt]
&\hspace{9cm}
\mbox{for any $u \in H_{rad}^{1}(\mathbb{R}^{d})$}, 
\end{split}
\end{equation}
where the implicit constant depends only on $d$. 
\par 
Now, put 
\begin{align}
\label{11/06/19/12:49}
\kappa_{1}(x)
&:= 
2\int_{0}^{\frac{|x|^{2}}{R^{2}}}\!\!\! 
\rho(r)\,dr
+\frac{4|x|^{2}}{R^{2}}\rho \biggm(\frac{|x|^{2}}{R^{2}}\biggm),
\\[6pt]
\label{11/06/19/11:56}
\kappa_{2}(x)
&:=
d \! \int_{0}^{\frac{|x|^{2}}{R^{2}}}\!\!\! \rho(r)\,dr 
+ 
\frac{2|x|^{2}}{R^{2}} \rho \bigg( \frac{|x|^{2}}{R^{2}}\bigg),
\end{align}
so that 
\begin{equation}\label{11/06/19/12:50}
\mathcal{K}^{R}(v)=
\int_{\mathbb{R}^{d}}\kappa_{1}|\nabla v|^{2}
-
\int_{\mathbb{R}^{d}}\kappa_{2}
\left(
\mu
\frac{p-1}{p+1}\left| v \right|^{p+1}
\!+
\frac{2}{d}\left| v \right|^{2^{*}}
 \right).
\end{equation}
Then, we can verify that  
\begin{align}
\label{11/06/19/12:26}
&{\rm supp}\,\kappa_{j} \subset \{|x|\ge R\} 
\qquad 
\mbox{for $j=1,2$}, 
\\[6pt]
\label{11/06/19/12:27}
&\left\| \kappa_{j} \right\|_{L^{\infty}}\lesssim 1
\qquad 
\mbox{for $j=1,2$},
\\[6pt]
\label{11/06/19/12:28}
&
\sup_{x\in \mathbb{R}^{d}}
\max
\left\{ 
-\frac{x\cdot \nabla \sqrt{\kappa_{2}(x)}}{|x|}, \, 0
\right\}
\lesssim \frac{1}{R},
\\[6pt]
\label{11/06/19/12:53}
&1\lesssim \inf_{|x|\ge R}\frac{\kappa_{1}(x)}{\kappa_{2}(x)}.
\end{align}
The inequality \eqref{08/06/12/7:28} together with 
 \eqref{11/06/19/12:27}, \eqref{11/06/19/12:28} and 
 \eqref{11/06/18/17:06} yields that
\begin{equation}\label{11/06/19/11:55}
\begin{split}
&\int_{\mathbb{R}^{d}}\kappa_{2} \frac{2}{d}|v|^{2^{*}} 
\le 
\|\sqrt{\kappa_{2}}^{\frac{1}{2}}v \|_{L^{\infty}}^{\frac{4}{d-2}}
\int_{\mathbb{R}^{d}}\kappa_{2}^{\frac{d-3}{d-2}} |v|^{2}
\\[6pt]
&
\lesssim 
\frac{1}{R^{\frac{2(d-1)}{d-2}}}
\left\| v \right\|_{L^{2}(|x|\ge R)}^{\frac{2}{d-2}}
\left\{ 
\left\|\sqrt{\kappa_{2}}\nabla v \right\|_{L^{2}}^{\frac{2}{d-2}}
+
\frac{1}{R^{\frac{2}{d-2}}}
\left\| v \right\|_{L^{2}}^{\frac{2}{d-2}}
\right\} \|v\|_{L^{2}}^{2}
\\[6pt]
&\le 
\frac{ \|\psi_{0}\|_{L^{2}}^{2}}{R^{\frac{2(d-1)}{d-2}}}
\left( 
\left\| v \right\|_{L^{2}(|x|\ge R)}
\left\|\sqrt{\kappa_{2}}\nabla v \right\|_{L^{2}}
\right)^{\frac{2}{d-2}}
+
\frac{ \|\psi_{0}\|_{L^{2}}^{2^{*}}}{R^{2^{*}}},
\end{split}
\end{equation}
where the implicit constant depends only on $d$ and $\rho$.
Moreover, applying the Young inequality ($ab \le \frac{a^{r}}{r}+\frac{b^{r'}}{r'}$ with $\frac{1}{r}+\frac{1}{r'}=1$)  to the right-hand side of \eqref{11/06/19/11:55}, we obtain that 
\begin{equation}\label{11/06/19/12:41}
\int_{\mathbb{R}^{d}}\kappa_{2} \frac{2}{d}|v|^{2^{*}} 
\lesssim  
\left\| v \right\|_{L^{2}(|x|\ge R)}^{2}
\left\|\sqrt{\kappa_{2}}\nabla v \right\|_{L^{2}}^{2}
+
\left(
\frac{\left\|\psi_{0}\right\|_{L^{2}}^{2}}{R^{\frac{2(d-1)}{d-2}}}
\right)^{\frac{d-2}{d-3}}
+
\frac{ \|\psi_{0}\|_{L^{2}}^{2^{*}}}{R^{2^{*}}}.
\end{equation}
Similarly, we have 
\begin{equation}\label{11/06/19/14:12}
\int_{\mathbb{R}^{d}}\kappa_{2} \frac{2}{d}|v|^{p+1} 
\lesssim  
\left\| v \right\|_{L^{2}(|x|\ge R)}^{2}
\left\|\sqrt{\kappa_{2}}\nabla v \right\|_{L^{2}}^{2}
+
\left(
\frac{\left\|\psi_{0}\right\|_{L^{2}}^{2}}
{R^{\frac{(d-1)(p-1)}{2}}}
\right)^{\frac{4}{5-p}}
+
\frac{ \|\psi_{0}\|_{L^{2}}^{2^{*}}}{R^{\frac{d(p-1)}{2}}}.
\end{equation}
If $R$ is so large that 
\begin{equation}\label{11/06/19/14:16}
\left(
\frac{\left\|\psi_{0}\right\|_{L^{2}}^{2}}
{R^{\frac{(d-1)(p-1)}{2}}}
\right)^{\frac{4}{5-p}}
+
\frac{ \|\psi_{0}\|_{L^{2}}^{2^{*}}}{R^{\frac{d(p-1)}{2}}}
+
\left(
\frac{\left\|\psi_{0}\right\|_{L^{2}}^{2}}{R^{\frac{2(d-1)}{d-2}}}
\right)^{\frac{d-2}{d-3}}
+
\frac{ \|\psi_{0}\|_{L^{2}}^{2^{*}}}{R^{2^{*}}}
\ll 
\varepsilon_{0},
\end{equation} 
then \eqref{11/06/18/17:09} together with  \eqref{11/06/19/12:41} and \eqref{11/06/19/14:16} shows that 
\begin{equation}\label{11/06/19/14:18}
\int_{\mathbb{R}^{d}} \left( \kappa_{1}
-
C \left\| v \right\|_{L^{2}(|x|\ge R)}^{2}\kappa_{2}
\right)
|\nabla v|^{2}
\le 
-\frac{\varepsilon_{0}}{8}
\end{equation}
for some constant $C>0$ depending only on $d$, $\mu$, $p$ and $\rho$ (the notation $A \ll \varepsilon_{0}$ means that the quantity $A$ is much smaller than $\varepsilon_{0}$). 
Hence, we conclude that  
\begin{equation}\label{11/06/19/14:26}
\inf_{|x|\ge R}{ \frac{\kappa_{1}(x)}{\kappa_{2}(x)}}< 
C\|v\|_{L^{2}(|x| \ge R)}^{2},
\end{equation}
which together with \eqref{11/06/19/12:53} gives us the desired result.
\\[6pt] 
{\bf Step 2}. Let $m_{*}$ and $R_{*}$ be constants found in (\ref{11/06/18/17:12}), and let $T_{\max}:=\sup{I_{\max}}$. Then, our next claim is that 
\begin{equation}\label{11/06/19/17:44}
\sup_{t\in [t_{0}, T_{\max})} \int_{|x| \ge R} 
|\psi(x,t)|^{2} \,dx \le m_{*}
\end{equation}
for any $R>R_{*}$ satisfying the following properties:
\begin{align}
\label{11/06/18/18:33}
&
\left\| \Delta^{2}W_{R}\right\|_{L^{\infty}} \|\psi_{0}\|^{2}_{L^{2}}\le \varepsilon_{0},
\\[6pt]
\label{11/06/18/18:34}
&
\int_{|x| \geq R} |\psi_{0}(x)|^{2}\,dx < m_{*}, 
\\[6pt]
\label{11/06/18/18:35}
&
\frac{1}{R^{2}} 
\left(1 + \frac{1}{\varepsilon_{0}}
\left\| \nabla \psi_{0} \right\|_{L^{\infty}}^{2} 
\right) 
\int_{\mathbb{R}^{d}} W_{R}|\psi_{0}|^{2} < m_{*}.  
\end{align}
Here, we remark that  it is possible to take $R$ satisfying \eqref{11/06/18/18:35} (see \cite{Nawa8}).
\par 
In order to prove \eqref{11/06/19/17:44}, we introduce 
\begin{equation}
\label{11/06/18/21:02}
T_{R} := \sup\left\{
T>  t_{0} \Biggm|  
\sup_{t_{0} \le t< T} \int_{|x| \geq R} |\psi(x,t)|^{2}\,dx \le m_{*}
\right\}
\quad 
\mbox{for $R>0$}, 
\end{equation}
and prove that $T_{R}=T_{\max}$ for any $R>0$ satisfying 
 \eqref{11/06/18/18:33}--\eqref{11/06/18/18:35}. 
\par 
It follows from (\ref{11/06/18/18:34}) together with the continuity of $\psi(t)$ in $L^{2}(\mathbb{R}^{d})$ that $T_{R}>t_{0}$. 
\par 
We suppose the contrary that $T_{R}< T_{\max}$. Then, we have 
\begin{equation} \label{11/06/18/21:11}
\int_{|x| \ge R} |\psi(x,T_{R})|^{2}\,dx = m_{*}. 
\end{equation}
Hence, we see from the definition of $m_{*}$ (see \eqref{11/06/18/17:12}) together with the mass conservation law (\ref{11/06/14/11:02}) that 
\begin{equation} 
\label{11/06/18/21:13}
- \frac{\varepsilon_{0}}{4} \leq 
\mathcal{K}^{R}(\psi(T_{R})). 
\end{equation}
Combining the generalized virial identity \eqref{08/03/29/19:05}   with \eqref{11/06/18/21:13} and \eqref{11/06/18/18:33}, 
 we obtain 
\begin{equation}
\label{11/06/18/21:22}
\begin{split}
\int_{\mathbb{R}^{d}}W_{R}|\psi(T_{R})|^{2} 
&
< 
\int_{\mathbb{R}^{d}}W_{R}|\psi_{0}|^{2} 
+ 
T_{R}\Im \int_{\mathbb{R}^{d}} \nabla W_{R} 
\cdot \nabla \psi_{0} \overline{\psi_{0}} 
- \frac{T_{R}^{2}}{2}\varepsilon_{0}  
\\[6pt]
& \qquad 
+ 
\frac{T^{2}_{R}}{8}  \varepsilon_{0}
+
\frac{T_{R}^{2}}{8} \varepsilon_{0}
\\[6pt]
&
=
\int_{\mathbb{R}^{d}}W_{R}|\psi_{0}|^{2}
+
\frac{1}{\varepsilon_{0}}
\left| \int_{\mathbb{R}^{d}} \nabla W_{R} 
\cdot \nabla \psi_{0} \overline{\psi_{0}} \right|^{2}
\\[6pt]
&\qquad 
-
\frac{\varepsilon_{0}}{4}
\left( T_{R}
-
\frac{2}{\varepsilon_{0}}\Im \int_{\mathbb{R}^{d}} \nabla W_{R} 
\cdot \nabla \psi_{0} \overline{\psi_{0}} 
\right)^{2}
\\[6pt]
&\le  
\left( 1 + 
\frac{1}{\varepsilon_{0}}
\left\| \nabla W_{R} \right\|_{L^{\infty}}^{2}
\right)
\int_{\mathbb{R}^{d}} W_{R}|\psi_{0}|^{2}.
\end{split}
\end{equation}
Hence, we see from \eqref{11/06/18/18:35} that 
\begin{equation}\label{11/06/18/22:17}
\int_{\mathbb{R}^{d}}W_{R}|\psi(T_{R})|^{2} 
<R^{2}m_{*}.  
\end{equation}
Since $W_{R}(x) \geq R^{2}$ for $|x| > R$, \eqref{11/06/18/22:17} gives us that
\begin{equation}
\label{11/06/19/22:42}
\int_{|x| \geq R} |\psi(x, T_{R})|^{2} dx 
= \frac{1}{R^{2}} \int_{|x|\geq R} R^{2} 
|\psi(x, T_{R})|^{2} dx 
\leq \frac{1}{R^{2}} \int_{\mathbb{R}^{d}}W_{R}|\psi(T_{r})|^{2} 
< m_{*}, 
\end{equation}
which contradicts \eqref{11/06/18/21:11}. Thus, we have proved $T_{R}=T_{\max}$  and therefore (\ref{11/06/19/17:44}) holds. 
\par 
Similarly, putting $T_{\min}:=\inf{I_{\max}}$, we have  
\begin{equation}\label{11/06/19/22:39}
\sup_{t\in (T_{\min}, t_{0}]} \int_{|x| \ge R} 
|\psi(x,t)|^{2} \,dx \le m_{*}
\end{equation}
for any $R>R_{*}$ satisfying \eqref{11/06/18/18:33}--\eqref{11/06/18/18:35}.
\\[6pt]
{\bf Step 3} We complete the proof of Theorem \ref{11/06/14/9:50}. Take $R>R_{*}$ satisfying 
 \eqref{11/06/18/18:33}--\eqref{11/06/18/18:35}. Then, it follows from the definition of $m_{*}$ together with \eqref{11/06/19/17:44} and \eqref{11/06/19/22:39} that 
\begin{equation}\label{11/06/19/18:05}
- \frac{\epsilon_{0}}{4} 
\le 
\mathcal{K}^{R}(\psi(t))
\qquad
\mbox{for any $t \in I_{\max}$}. 
\end{equation}
Hence, we see from the generalized virial identity \eqref{08/03/29/19:05} together with \eqref{11/06/19/18:05} and \eqref{11/06/18/18:33} that   
\begin{equation}
\label{11/06/18/22:53}
\int_{\mathbb{R}^{d}}
W_{R}|\psi(t)|^{2} 
\le 
\int_{\mathbb{R}^{d}}W_{R} |\psi_{0}|^{2} 
+ 
t\Im \int_{\mathbb{R}^{d}}\nabla W_{R}\cdot \nabla \psi_{0}
\overline{\psi_{0}} 
- 
\frac{\varepsilon_{0}t^{2}}{2}
\quad
\mbox{for any $t \in I_{\max}$}.
\end{equation}
Supposing $\sup{I_{\max}}= +\infty$ or $\inf{I_{\max}}=-\infty$, we have from \eqref{11/06/18/22:53} that 
\begin{equation}\label{11/06/24/17:58}
\int_{\mathbb{R}^{d}}
W_{R}|\psi(t)|^{2}<0 
\qquad 
\mbox{for some $t \in I_{\max}$}, 
\end{equation} 
which is a contradiction. Thus, $I_{\max}$ is bounded. 
\end{proof}

%%%%%%
\bibliographystyle{plain}

\begin{thebibliography}{99}
\bibitem{Akahori-Nawa}
Akahori, T. and Nawa, H., Blowup and Scattering problems for the Nonlinear Schr\"odinger equations, preprint, arXiv:1006.1485.

\bibitem{Alves-Souto-Montenegro}
 Alves, C. O., Souto, M.A.S., and 
 Montenegro, M., 
 Existence of a ground state solution for a nonlinear scalar field equation with critical growth.
 Calc. Var. 

\bibitem{Berestycki-Lions}
 Berestycki, H. and Lions, P.-L. Nonlinear scalar field equations. I. Existence of a ground state. Arch. Rational Mech. Anal. {\bf 82} (1983), 313--345. 



\bibitem{Berestycki-Cazenave}
 Berestycki, H. and Cazenave, T., Instabilit\'e des \'etats stationnaires dans les \'equations de Schr\"odinger et de Klein-Gordon non lin\'eaires. C. R. Acad. Sci. Paris S\'er. I Math. {\bf 293} (1981), 489--492. 


\bibitem{Brezis-Lieb}
 Br\'ezis, H. and Lieb, E.H.,
 A relation between pointwise convergence of functions and convergence of functionals.
 Proc. Amer. Math. Soc. {\bf 88} (1983) 485--489. 

\bibitem{Bre-Ni}
  Brezis, H. and Nirenberg, L., 
 Positive solutions of nonlinear elliptic equations involving critical 
 Sobolev exponents, 
 Comm. Pure Appl. Math. {\bf 36} (1983), 437--477. 

\bibitem{Cazenave}
 Cazenave, T., 
 Semilinear Schr\"odinger equations. 
 Courant Lecture Notes in Mathematics, {\bf 10}.
 New York university, Courant Institute of Mathematical Sciences, New York        (2003) .

\bibitem{Cazenave-Weissler1} 
 Cazenave, T. and Weissler, F.B.,
 The Cauchy problem for the critical nonlinear Schr\"odinger equation in $H^{s}$, Nonlinear Anal. {\bf 14} (1990), 807--836.

\bibitem{Kenig-Merle}
 Kenig, C.E. and  Merle, F.,  
 Global well-posedness, scattering and blow-up for the energy-critical, focusing, non-linear Schr\"odinger equation in the radial case. 
 Invent. Math. {\bf 166} (2006), 645--675.

\bibitem{Killip-Visan}
 Killip R. and Visan M.,  
 The focusing energy-critical nonlinear Schr\"odinger equation in dimensions five and higher. 
 Amer. J. Math. {\bf 132} (2010) 361-424.

\bibitem{LeCoz}
 Le Coz, S., 
 A note on Berestycki-Cazenave's classical instability result for nonlinear 
 Schr\"odinger equations.
 Adv. Nonlinear Stud. {\bf 8} (2008), 455--463. 



\bibitem{Nawa8} Nawa, H., 
 Asymptotic and Limiting Profiles of Blowup Solutions of the Nonlinear Schr\"odinger Equations with Critical Power, 
 Comm. Pure Appl. Math. {\bf 52} (1999), 193-270.

\bibitem{Ogawa-Tsutsumi}
 Ogawa, T. and Tsutsumi, Y.,
 Blow-up of $H^1$-solution for the nonlinear Schr\"odinger equation.
 J. Differential Equations {\bf 92} (1991) 317--330.

\bibitem{Struwe}
 Struwe, M. 
 Variational methods. Applications to nonlinear partial differential equations and Hamiltonian systems. Fourth edition. 
 Ergebnisse der Mathematik und ihrer Grenzgebiete. 3. Springer-Verlag, Berlin.

\bibitem{Tao-Visan}
 Tao, T. and Visan, M., 
 Stability of energy-critical nonlinear Schr\"odinger equations in high dimensions, 
 Electron. J. Differential Equations {\bf 118} (2005), 1--28.

\bibitem{willem}
 Willem, M., 
 Minimax theorems, 
 Birkh\"{a}user, 1996. 
\end{thebibliography}
%%%%%%

%%%%%%
\noindent
{
Takafumi Akahori,
\\
Faculty of Engineering
\\
Shizuoka University, 
\\ 
Jyohoku 3-5-1, Hamamatsu, 432-8561, Japan 
\\ 
E-mail: ttakaho@ipc.shizuoka.ac.jp
}
\\
\\
{
Slim Ibrahim,
\\ 
Department of Mathematics and Statistics 
\\
University of Victoria
\\
Victoria, British Columbia 
\\
E-mail:ibrahim@math.uvic.ca
}
\\
\\
{
Hiroaki Kikuchi,
\\ 
School of Information Environment
\\
Tokyo Denki University, 
\\
Inzai, Chiba 270-1382, Japan 
\\
E-mail: hiroaki@sie.dendai.ac.jp
}
\\
\\
{
Hayato Nawa,
\\
Division of Mathematical Science, Department of System Innovation
\\
Graduate School of Engineering Science
\\
Osaka University, 
\\
Toyonaka 560-8531, Japan 
\\
E-mail: nawa@sigmath.es.osaka-u.ac.jp
}

\end{document}